\documentclass{amsproc}

\usepackage{amsmath}
\usepackage{amsfonts}
\usepackage{amsthm}
\usepackage{graphicx}
\usepackage{eucal}
\usepackage{amscd}
\usepackage[all,2cell]{xy}
\usepackage{amssymb}
\usepackage{mathrsfs}
\usepackage{hyperref}
 \usepackage{tikz}
 \usetikzlibrary{positioning}
 \tikzset{mynode/.style={draw,circle,inner sep=1pt,outer sep=0pt}}

\numberwithin{equation}{section} 

\newtheorem{theorem}{Theorem}[section]
\newtheorem{corollary}[theorem]{Corollary}
\newtheorem{lemma}[theorem]{Lemma}
\newtheorem{proposition}[theorem]{Proposition}
\theoremstyle{definition}
\newtheorem{definition}[theorem]{Definition}
\newtheorem{Ex}[theorem]{Example}
\theoremstyle{remark}
\newtheorem{remark}[theorem]{Remark}
\newtheorem{remarks}[theorem]{Remarks}

\newtheorem{Exs}[theorem]{Examples}

\newcommand{\id}{\mbox{\rm id}}

\DeclareMathOperator{\op}{op}
\DeclareMathOperator{\Aut}{Aut}

\newcommand{\Gp}{\mathsf{Gp}}

\newcommand{\Hp}{\mathsf{Heap}}
\newcommand{\SGrp}{\mathsf{SGrp}}
\newcommand{\LQGrp}{\mathsf{LQGrp}}

\newcommand{\Set}{\mathsf{Set}}
\newcommand{\Sym}{\operatorname{Sym}}
\newcommand{\inv}{\operatorname{inv}}
\DeclareMathOperator{\E}{E}
\newcommand{\RGrp}{\mathsf{RGrp}}
\newcommand{\RHp}{\mathsf{RHp}}
\DeclareMathOperator{\bi}{+}

\newdir{ |>}{{}*!/-3.5pt/@{|}*!/-8pt/:(1,-.2)@^{>}*!/-8pt/:(1,+.2)@_{>}}

\begin{document}

\title{Right groups, left  quasigroups, and right heaps}

\author{A. Albano}
%    Address of record for the research reported here
\address[Andrea Albano]{Dipartimento di Matematica  e Fisica ``Ennio De Giorgi'', Universit\`a del Salento, 73100 Lecce, Italy}
%    Current address
%\curraddr{Department of Mathematics and Statistics,
%Case Western Reserve University, Cleveland, Ohio 43403}
\email{andrea.albano@unisalento.it}
%    \thanks will become a 1st page footnote.
\thanks{A.A., M.M., and P.S.~are partially supported by the University of Salento, Department of Mathematics and Physics ``E. De Giorgi''. They are members of GNSAGA (INdAM), and members of the nonprofit association ``AGTA-Advances in Group Theory and Applications''. The first author was supported by a scholarship
financed by the Ministerial Decree no. 118/2023, based on the NRRP - funded by the
European Union - NextGenerationEU - Mission 4.}

\author{A. Facchini}
\address[Alberto Facchini]{Dipartimento di Matematica ``Tullio Levi-Civita'', Universit\`a di 
Padova, 35121 Padova, Italy}
\email{facchini@math.unipd.it}
\thanks{}

\author{M. Mazzotta}
\address[Marzia Mazzotta]{Dipartimento di Matematica  e Fisica ``Ennio De Giorgi'', Universit\`a del Salento, 73100 Lecce, Italy}
%    Current address
%\curraddr{Department of Mathematics and Statistics,
%Case Western Reserve University, Cleveland, Ohio 43403}
\email{marzia.mazzotta@unisalento.it}

\author{P. Stefanelli }
%    Address of record for the research reported here
\address[Paola Stefanelli ]{Dipartimento di Matematica  e Fisica ``Ennio De Giorgi'', Universit\`a del Salento, 73100 Lecce, Italy}
%    Current address
%\curraddr{Department of Mathematics and Statistics,
%Case Western Reserve University, Cleveland, Ohio 43403}
\email{paola.stefanelli@unisalento.it}

%    General info
\subjclass[2020]{Primary 20N05, 20N10; Secondary 16T25, 17B38}
\date{---}

\keywords{Right group, heap, skew brace, truss, Yang-Baxter equation}

\begin{abstract}
A right group is a semigroup $(S,\cdot)$ in which, for every $a,b\in S$, there is a unique $x\in S$ such that $a\cdot x=b$. In this article, we develop the theory of heaps starting not from groups, but from right groups. We thus get a natural definition of right heap. It is even possible to develop part of the theory starting from a left quasigroup, which is the non-associative analogue of a right group. Our motivation for this study is the investigation of left non-degenerate set-theoretic solutions of the Yang--Baxter equation. Thus, we are led to an analogue of the skew left trusses introduced by T.~Brzezi\'nski.
\end{abstract}

\maketitle

\section*{Introduction} 
Over the last ten years, skew left braces have proven to be an algebraic structure extremely useful in the calculation of set-theoretic solutions of the Yang--Baxter equation \cite{GuVe17}. In our previous paper \cite{AFMS}, we observed that for the basic techniques inherent in skew left braces, for instance, in the definitions of the maps $\lambda$ and $\rho$, it is often not necessary to use the group structure, but that it is sufficient to use left subtraction and left division. That is, it is sufficient to use right groups. Furthermore, left non-degenerate solutions are related to left quasigroups, the non-associative analogues of right groups. There is thus a natural correlation between right groups, left quasigroups, left non-degenerate solutions of the Yang--Baxter equation, and the left $RG$-semibraces as defined in \cite{AFMS}. We describe this correlation in more detail, motivating our paper, in Section \ref{2} of this article. In that section, we present the basic notions and fix the notation.

One of the possible ways to present the naturalness of the skew left distributivity property $a\circ(b+c)=a\circ b-a+a\circ c$ in skew left braces is, as shown by Brzezi\'nski, that pertaining to heaps and trusses \cite{Brze}. In this way, skew left distributivity becomes left distributivity with respect to the corresponding ternary operation. The main purpose of this paper is to show that the construction of heaps and trusses is possible not only for groups but also for right groups. We thus naturally find the notion of a right heap: A {\em right heap} is a pair $(H,[-,-,-])$, where $H$ is a set and $[-,-,-]$ is a ternary operation on $H$ that is associative, {\em left Mal'tsev} (that is, $[x,x,y]=y$ for every $x,y\in H$), and {\em right weakly Mal'tsev} (i.e., $[x,y,[y,z,w]]=[x,z,w]$ for every $x,y,z,w\in H$). In this way, we extend the theory of heaps to right heaps. Indeed, in Section~\ref{4}, we show that a small portion of the theory also applies to left quasigroups, for which there is a corresponding natural notion of a left quasiheap. In short, even for right groups and left quasigroups, it is possible to adopt the viewpoint of Viktor V.~Wagner \cite{HL}.

A heap can be thought of as a group with the identity element ``forgotten''.  In the case of right groups, we don't have an identity but several left identities, and in right heaps we forget all left identities, finding an extremely uniform structure. Particularly interesting is what the product decompositions of a right group as a product of a group and a right zero semigroup become in the case of right heaps.

\smallskip

In this paper, when we say ``algebra'' or ``variety'' we mean in the sense of Universal Algebra.

\section {Basic notions}\label{2}
\subsection{Left quasigroups} For every magma~$(M,\,\cdot)$, that is, for every set $M$ with a binary operation $\cdot$, 
it is possible to define a map~$\ell\colon  M\to M^M,\ m \mapsto \ell_m$,  where 
$\ell_m(n)=m\cdot n$ for all $n\in M$. This map  $\ell\colon  M\to M^M$ is a magma morphism if and only if $(M,\,\cdot)$ is a semigroup. The magma $(M,\,\cdot)$ is  {\em left cancellative} if all the mappings $\ell_m\colon M\to M$ ($m\in M$) are injective; \emph{right simple} if all the mappings $\ell_m\colon M\to M$ are surjective;
a {\em left quasigroup} if all the mappings $\ell_m\colon M\to M$ are bijective. If $(M,\,\cdot)$ is a left quasigroup, all the mappings $\ell_m\colon M\to M$ have inverse mappings  $(\ell_m)^{-1}\colon M\to M$, hence it is possible to define another binary operation $\backslash $ on the set $M$ setting $m\backslash n := (\ell_m)^{-1}(n)$ for all $m,n\in M$. Since $\ell_m$ and $(\ell_m)^{-1}$ are mutually inverse mappings $M\to M$, it follows that $m\cdot (m\backslash n) = n = m\backslash (m\cdot n)$ for all $m,n\in M$. Hence, the variety of left quasigroups can be defined as the variety of all algebras $(Q,\,\cdot,\,\backslash)$, where $Q$ is a set and $\cdot,\backslash$ are two binary operations on $Q$ satisfying the identities $x\cdot (x\backslash y)=y$ and $x\backslash (x\cdot y)=y$. In the setting of left quasigroups, we will say that the binary operation $\backslash$ is the {\em right inverse} of the binary operation $\cdot$. The right inverse of multiplication  $\cdot$ is usually called {\em right division}. In additive notation, the right inverse of addition $+$ is usually called {\em right subtraction}. 
Our first lemma follows immediately from the notions above.
\begin{lemma}
The following conditions are equivalent for a magma $(M,\,\cdot)$:
\begin{itemize}
    \item[{\rm(a)}] $M$ is a left quasigroup.
    \item[{\rm(b)}] 
    $M$ is right simple 
and left cancellative.
\item[{\rm(c)}] For every $a,b\in M$, there exists a unique element $x\in M$ such that $a \cdot x = b$.\end{itemize}\end{lemma}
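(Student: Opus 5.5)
The plan is to unwind all three conditions into statements about the left multiplication maps $\ell_a\colon M\to M$, $\ell_a(x)=a\cdot x$, so that the lemma reduces to the set-theoretic fact that a map is bijective if and only if it is injective and surjective, if and only if every element of the codomain has exactly one preimage.

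First I will show (a)$\Leftrightarrow$(b). By the definitions, $M$ is a left quasigroup exactly when every $\ell_m$ is bijective. It is left cancellative when every $\ell_m$ is injective, and right simple when every $\ell_m$ is surjective. For each fixed $m$, bijectivity of $\ell_m$ is the conjunction of injectivity and surjectivity. Quantifying over all $m\in M$ then gives the equivalence, since a universal statement of a conjunction is the conjunction of the universal statements.

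Next I will show (a)$\Leftrightarrow$(c). Fix $a\in M$. The statement ``for every $b\in M$ there is a unique $x$ with $a\cdot x=b$'' says precisely that every $b$ has exactly one preimage under $\ell_a$. Existence of the preimage is surjectivity of $\ell_a$, and its uniqueness is injectivity of $\ell_a$. So for each fixed $a$, condition (c) is equivalent to bijectivity of $\ell_a$, and quantifying over $a$ gives (a).

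There is no real obstacle here. The only point needing care is that the quantifier over $a$ (respectively $m$) sits outside the existence and uniqueness statements, so that each condition is checked one map $\ell_a$ at a time. Once this is noted, all three equivalences are immediate from the definitions. Associativity plays no role, which is why the lemma holds for arbitrary magmas.
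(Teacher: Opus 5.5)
Your proposal is correct and matches the paper. The paper gives no written proof; it only remarks that the lemma ``follows immediately from the notions above,'' namely the definitions of left cancellative, right simple and left quasigroup as injectivity, surjectivity and bijectivity of every $\ell_m$, which is exactly your unwinding, with (c) read as existence and uniqueness of preimages under each $\ell_a$.
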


\begin{remark}\label{1} If, in the two identities that define the variety of left quasigroups, we exchange the two operations $\cdot$ and $\backslash$, the two identities swap with each other. 
As a consequence, if
$(Q,\cdot)$ is a left quasigroup, then $(Q,\backslash)$ is also a left
quasigroup. \end{remark}
For every left quasigroup $(M,\,\cdot)$, the map $\ell$ can be seen as a map $\ell\colon  M\to \Sym_M$, the symmetric group on the set $M$. Clearly, an element $m\in M$ is a left identity if and only if $\ell_m=\id_M$, the identity mapping $\id_M\colon M\to M$. Also, every left identity of $M$ is obviously an idempotent element.

It is now clear that all left quasigroups can be constructed as follows. 
Fix a set $X$ and any map $\varphi\colon X\to\Sym_X$. Define a multiplication $\cdot$ on $X$ setting $x\cdot y:=\varphi(x)(y)$ for all $x,y\in X$. Then $(X,\cdot)$ is a left quasigroup, and all left quasigroups can be constructed in this way. The right inverse operation $\backslash$ is given by $x\backslash y:=(\inv\circ \varphi)(x)(y)$, where $\inv\colon\Sym_X\to\Sym_X$ is the group anti-automorphism~$\sigma\mapsto\sigma^{-1}$.

Given a left quasigroup $(M,\,\cdot)$, let $f\colon M\to M$ be the mapping defined by $f(x)=x\backslash x$ for every $x\in M$, and let $E$ be the image of $f$. Let $\pi_E$ denote the corestriction of $f$ to $E$, that is, the mapping $\pi_E\colon M\to E$ defined by $\pi_E(x)=x\backslash x$ for every $x\in M$. Then $\{\,\pi_E^{-1}(e)\mid e\in E\,\}$ is a partition of $M$, where, for each $e\in E$, 
$$
\pi_E^{-1}(e)=\{\,x\in M\mid \pi_E(x)=e\,\}=\{\,x\in M\mid x\cdot e=x,\}.
$$ 
Moreover, for every idempotent element $e\in M$, $e\cdot e=e$ implies that $e\in \pi_E^{-1}(e)$, that $e$ acts as a right identity on $\pi_E^{-1}(e)$, and that $e\in E$.

Thus, for a left quasigroup $(M,\,\cdot)$, we have three naturally defined subsets of $M$: the set $E_1$ of all left identities of $(M,\,\cdot)$, the set $E_2$ of all idempotents of $(M,\,\cdot)$, and the set $E$ defined in the previous paragraph, and we have seen that $E_1\subseteq E_2\subseteq E$. Let us give an example that shows that both these inclusions can be proper.

\begin{Ex}\label{www} Let $X=\{1,2,3\}$ be a set, 
$S_3$ be the symmetric group on three objects, $\tau\in S_3$ be the transposition $(1\ 2)$, and $\varphi\colon X\to S_3$ be the constant mapping equal to~$\tau$. Then the corresponding left quasigroup $X$ has no left identity, $3$ is the unique idempotent element, and $E_3=X$. The partition in the sets $\pi_E^{-1}(e)$ is the trivial partition $\{\{1\},\{2\},\{3\}\}$.
\end{Ex}

Similarly for {\em right quasigroups}, the magmas~$(M,\,\cdot)$ for which the maps 
$r_m(n)=n\cdot m$ are all biective. If $(Q, \cdot)$ is both a left quasigroup and a right quasigroup, then $(Q, \cdot)$ is called a {\em quasigroup}.

\subsection{Right groups} {\em Right groups} are the non-empty left quasigroups $(Q,\cdot)$ with $\cdot$ associative, that is, the non-empty left quasigroups that are semigroups. 

A {\em right zero semigroup}  \cite[p.~4]{ClPr61} is a semigroup $(Q,\cdot)$ in which
$a\cdot b=b$ for all $a,b\in~Q$. Since the operation $\cdot$ 
coincides with the second canonical projection~$\pi_2\colon S\times S\to S$, we will denote the operation itself by $\pi_2$, writing $a\,\pi_2\, b=b$ for all $a,b\in S$. 
Hence, right zero semigroups are those of the form $(S,\pi_2)$ for some set $S$. In a magma $M$ all elements are left identities if and only if $M$ is a right zero semigroup, if and only if the map $\ell\colon M\to M^M$ is the constant mapping equal to $\id_M$.

\begin{lemma}\label{1.2} {\rm \cite[Lemma~1.26]{ClPr61}}  Every idempotent of a right simple semigroup $S$ is a left identity for $S$.
\end{lemma}

%\begin{proof} If $e,x\in S$ and $e^2=e$, then $x=ey$ for some $y\in S$, so $ex=e(ey)0ey=x$.
%\end{proof}

From Lemma \ref{1.2} we see that $E_1=E_2=E$ for a right group $S$. Thus Lemma~\ref{1.2} holds for right groups, but not for left quasigroups, as the left quasigroup in Example~\ref{www} shows.

\begin{theorem}\label{1.1} 
{\rm \cite[Section~1.11, Theorem~1.27]{ClPr61}} The following assertions on a semigroup $S\ne\emptyset$ are equivalent:
\begin{itemize}
\item[{\rm(a)}] $S$ is a right group.

\item[{\rm(b)}] $S$ is right simple and contains an idempotent.

\item[{\rm(c)}] $S$ is isomorphic to the external direct product of a group $G$ and a non-empty right zero semigroup $E$.

\item[{\rm(d)}] There exists an element $e\in S$ such that: \\{\rm (1)}  $e$ is a left identity  for $S$; and\\
{\rm(2)} every element of $S$ has a right inverse with respect to $e$.

\item[{\rm(e)}] {\rm (1)} $S$ has a left identity; and \\
{\rm (2)} for every left identity $e$ of $S$ and every element $a\in S$, $a$ has a right inverse with respect to $e$.

\item[{\rm(f)}]  $S$ is the disjoint union of a non-empty family of groups, and the set of the identities of these groups is a right zero subsemigroup of $S$.
\end{itemize}
\end{theorem}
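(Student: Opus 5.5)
I will prove the equivalences by a cycle (a)$\Rightarrow$(b)$\Rightarrow$(e)$\Rightarrow$(d)$\Rightarrow$(a), and then attach (c) and (f) through (a)$\Rightarrow$(c)$\Rightarrow$(f)$\Rightarrow$(b).

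\emph{(a)$\Rightarrow$(b).} A right group is right simple by definition. Fix $a\in S$ and let $e$ be the unique solution of $a\cdot e=a$. Then $a\cdot e^2=(ae)e=ae=a$, so $e^2=e$ by uniqueness.

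\emph{(b)$\Rightarrow$(e).} By Lemma~\ref{1.2}, the idempotent is a left identity. If $e$ is any left identity and $a\in S$, right simplicity gives $x$ with $ax=e$, which is a right inverse of $a$ with respect to $e$.

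\emph{(e)$\Rightarrow$(d).} This is trivial.

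\emph{(d)$\Rightarrow$(a).} Right simplicity is immediate: $a(a'b)=eb=b$, where $a'$ denotes the right inverse of $a$ with respect to $e$. Left cancellation is the main point. Suppose $ax=ay$. Multiplying on the left by $a'$ gives $(a'a)x=(a'a)y$. Now $a'a$ is idempotent, because $(a'a)(a'a)=a'(aa')a=a'ea=a'a$. Let $a''$ be the right inverse of $a'$. Then $a'a=ea'a=a'a''a'a$, and I want to show $a'a$ is a left identity. Note that $aa'=e$ and $a'a''=e$ give $a=ea=a'' \cdot$? More precisely, $a''=ea''=aa'a''=ae=a\cdot e$. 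Hence $a'a\cdot z=a'a''\ldots$; rather, compute $(a'a)z$ with $a=a''$-type substitutions: $a e=a''$, so $a'\,a\,z=a'a\,(ez)=a'(ae)z=a'a''z=ez=z$. Thus $a'a$ is a left identity, and so $x=y$. Checking this computation carefully is the step I expect to require the most care.

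\emph{(a)$\Rightarrow$(c).} Fix a left identity $e$ and set $G=Se$ and $E=\{\text{idempotents}\}$. Then $E$ is a right zero semigroup, since each idempotent is a left identity by Lemma~\ref{1.2}, so $fg=g$. The set $G$ is a group with identity $e$: it is closed because $xe\,ye=x(eye)=xye$, $e$ is a two-sided identity on $G$, and the inverse of $xe$ is $x'e$. Define $\phi\colon G\times E\to S$ by $(g,f)\mapsto gf$. This map is a homomorphism: $g f h f'=g(fh)f'=ghf'$, since $f$ is a left identity and $fh=h$. It is surjective: write $s=(se)\cdot f$, where $f$ is the idempotent with $sf=s$ (so $s=s e f$, because $ef=f$). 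It is injective by left cancellation: if $gf=hf'$, multiply on the right by $e$ to get $g=h$ (using $fe=e$), and then cancel $g$ on the left to get $f=f'$.

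\emph{(c)$\Rightarrow$(f).} In $G\times E$, the sets $G\times\{f\}$ are groups with identity $(1,f)$, and these identities form a copy of $E$, which is right zero.

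\emph{(f)$\Rightarrow$(b).} The set of identities is non-empty and consists of idempotents. For right simplicity, given $a\in G_f$ and $b\in G_g$, I need $x$ with $ax=b$. Since $(fg)=g$, try $x=a^{-1}b$, where $a^{-1}$ is taken in $G_f$. Then $ax=fb$, so I must show that $f$ is a left identity on $G_g$: $fb=f(gb)=(fg)b=gb=b$.
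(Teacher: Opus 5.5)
The paper gives no proof of this theorem; it quotes it from Clifford--Preston \cite[Theorem~1.27]{ClPr61}. Your argument, the cycle (a)$\Rightarrow$(b)$\Rightarrow$(e)$\Rightarrow$(d)$\Rightarrow$(a) together with (a)$\Rightarrow$(c)$\Rightarrow$(f)$\Rightarrow$(b), is correct and makes the statement self-contained. Your inverse of $\phi$, namely $s\mapsto(se,\,s\backslash s)$, is exactly the splitting $\mu(\sigma_e\times\tau)\Delta=\id_S$ that the paper describes informally later in Section~\ref{2}.

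Two points of presentation need fixing.

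In (d)$\Rightarrow$(a), remove the false starts. The computation that actually works is $a''=(aa')a''=a(a'a'')=ae$, which gives $(a'a)z=a'\bigl((ae)z\bigr)=(a'a'')z=ez=z$. There is an even shorter route. In that same step you have already shown that $S$ is right simple and that $a'a$ is idempotent, so Lemma~\ref{1.2} makes $a'a$ a left identity at once, and cancellation follows.

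In (a)$\Rightarrow$(c), say explicitly that a left identity $e$ exists; this follows from (a)$\Rightarrow$(b) and Lemma~\ref{1.2}. You should also verify that $x'e$ is a two-sided inverse of $xe$, not only a right inverse. This holds because $(x'e)(xe)=x'(xe)=(x'x)e=e$, since $x'x$ is a left identity by the argument of (d)$\Rightarrow$(a). Alternatively, invoke the standard fact that a monoid in which every element has a right inverse is a group.
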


%Similarly to Lemma \ref{1.2}, every idempotent of a left cancellative semigroup $S$ is a left identity for $S$, because if $e,x\in S$ and $e^2=e$, then $e(ex)=ex$ implies $ex=x$ for $S$ left cancellative. But $S$ left cancellative and $S$ contains an idempotent does not imply ``$S$ is a right group'', as the example of the additive monoid $\N_0$ shows. 
%
%\smallskip

Notice that the symmetry between the two operations $\cdot$ and $\backslash$ described in Remark~\ref{1} does not hold for associativity. Therefore if $(S,\cdot)$ is a right group, then $(S,\backslash)$ is a left
quasigroup, but not a right group in general, because associativity is missing. For instance, if $(G,\cdot)$ is a group, the operation $\backslash$ is the operation $a\backslash b=a^{-1}b$, hence $(G,\backslash)$ is a left quasigroup,
but not a right group in general. More precisely we have:

\begin{lemma} The following conditions are equivalent for a left quasigroup $(S,\cdot,\backslash)$:
\begin{itemize}
\item[\rm (1)] $S$ is a right group.

\item[\rm (2)] The mapping $\ell\colon S\to\Sym_S$, $\ell_x=x\cdot-$, is a semigroup homomorphism of the semigroup $(S,\cdot)$ into the group $\Sym_S$.

    \item[\rm (3)] The mapping $\ell'\colon S\to\Sym_S$, $\ell'_x=x\backslash-$, is a semigroup antihomomorphism of the semigroup $(S,\cdot)$ into the group $\Sym_S$.

\item[\rm (4)] $x\backslash(y\backslash z)=(y\cdot x)\backslash z$ for every $x,y,z\in S$.

\item[\rm (5)] The magma $(S,\backslash)$ is isomorphic to the direct product $G\times E$, where $G$ is a group with the operation $x\backslash y=x^{-1}y$, and $E$ is a right zero semigroup.
\end{itemize}
\end{lemma}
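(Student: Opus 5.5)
The plan is to prove the cycle (1)$\Leftrightarrow$(2)$\Leftrightarrow$(3)$\Leftrightarrow$(4), and then (1)$\Leftrightarrow$(5) using Theorem~\ref{1.1}.

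For (1)$\Leftrightarrow$(2) I use the observation recalled at the start of Section~\ref{2}: the map $\ell$ is a magma morphism if and only if $(S,\cdot)$ is associative. Indeed, $\ell_{x\cdot y}=\ell_x\circ\ell_y$ evaluated at $z$ reads $(xy)z=x(yz)$. For a left quasigroup each $\ell_x$ is a bijection, so $\ell$ lands in $\Sym_S$. Also $S\neq\emptyset$ is implicit in being a right group; I will note that the empty case is harmless or excluded.

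For (2)$\Leftrightarrow$(3), observe that $\ell'_x=(\ell_x)^{-1}$, since $x\backslash-$ is by definition the inverse of $x\cdot-$. Thus $\ell'=\inv\circ\ell$, and composing with the anti-automorphism $\inv$ of $\Sym_S$ turns homomorphisms into antihomomorphisms and conversely. Condition (3) is $\ell'_{y\cdot x}=\ell'_x\circ\ell'_y$. Evaluated at $z$, this is exactly $(y\cdot x)\backslash z=x\backslash(y\backslash z)$, which is (4), so (3)$\Leftrightarrow$(4) is immediate.

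For (1)$\Rightarrow$(5), I invoke Theorem~\ref{1.1}(c) to write $(S,\cdot)\cong G\times E$, with $G$ a group and $E$ a right zero semigroup. The right inverse is computed componentwise, because $\ell_{(g,e)}(h,f)=(gh,f)$ has inverse $(h,f)\mapsto(g^{-1}h,f)$. Hence $(g,e)\backslash(h,f)=(g^{-1}h,f)$, which is the product of $(G,x^{-1}y)$ with $(E,\pi_2)$. Isomorphisms of $(S,\cdot)$ transport $\backslash$, since $\backslash$ is defined from $\cdot$.

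The converse (5)$\Rightarrow$(1) is the only step needing care, since the hypothesis concerns only the magma $(S,\backslash)$. I recover $\cdot$ from $\backslash$: by Remark~\ref{1}, $\cdot$ is the right inverse of $\backslash$, that is, $a\cdot b=(\ell'_a)^{-1}(b)$. This is preserved under magma isomorphism. In $G\times E$ with $\backslash$ as in (5), the inverse of $(h,f)\mapsto(g^{-1}h,f)$ is $(h,f)\mapsto(gh,f)$. So $(S,\cdot)$ is isomorphic to the direct product of the group $G$ and the right zero semigroup $E$, which is a right group by Theorem~\ref{1.1}(c). Alternatively, (5) directly yields (4) by checking $x\backslash(y\backslash z)=(y\cdot x)\backslash z$ componentwise. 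Nonemptiness again follows from $S\neq\emptyset$.
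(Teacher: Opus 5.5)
Your proof is correct. The paper states this lemma without proof, and your argument is the one its surrounding remarks point to: (1)$\Leftrightarrow$(2) from the observation that $\ell$ is a magma morphism exactly when $\cdot$ is associative; (2)$\Leftrightarrow$(3)$\Leftrightarrow$(4) via $\ell'=\inv\circ\ell$ evaluated pointwise; and (1)$\Leftrightarrow$(5) via Theorem~\ref{1.1}(c) together with Remark~\ref{1}, which lets a magma isomorphism for $\backslash$ transport $\cdot$ as its right inverse.

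The one point to sharpen is the empty case, which is not harmless. The empty left quasigroup satisfies (2)--(5) vacuously (with $E=\emptyset$ in (5)) but is not a right group. So the hypothesis $S\neq\emptyset$ genuinely has to be added to the statement, as you in effect do.
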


\subsection{Set-theoretic solutions of the Yang--Baxter equation}\label{page5}
Let $X$ be a set and consider a mapping
$r\colon X\times X\to X\times X$. The pair $(X, r)$ is a \emph{set-theoretic solution of the Yang--Baxter equation}, briefly a \emph{solution}, if 
\begin{align}\tag{YBE}\label{YBE}
\left(r\times\id_X\right)
\left(\id_X\times \, r\right)
\left(r\times\id_X\right)
= 
\left(\id_X\times \, r\right)
\left(r\times\id_X\right)
\left(\id_X\times \, r\right).
\end{align}
We will write $r\left(x, y\right) = \left(\lambda_{x}\left(y\right), \rho_{y}\left(x\right)\right)$, where, for all $x,y\in X$, $\lambda_{x}$ and $\rho_{y}$ are maps $X\to X$. The pair
$(X, r)$ is a solution if and only if, for all $x,y,z \in X$,
    \begin{align}
     &\label{first} \lambda_x\lambda_y(z)=\lambda_{\lambda_x\left(y\right)}\lambda_{\rho_y\left(x\right)}\left(z\right)\tag{Y1}\\
    &  \label{second}\lambda_{\rho_{\lambda_y\left(z\right)}\left(x\right)}\rho_z\left(y\right)=\rho_{\lambda_{\rho_y\left(x\right)}\left(z\right)}\lambda_x\left(y\right)\tag{Y2}\\
      &\label{third}\rho_z\rho_y(x)=\rho_{\rho_z\left(y\right)}\rho_{\lambda_y\left(z\right)}\left(x\right).\tag{Y3}
  \end{align} 
A solution is \textit{left non-degenerate} if $\lambda_x \in \Sym_X$ for all $x \in X$; 
\textit{right non-degenerate} if $\rho_x \in \Sym_X$ for all $x \in X$; and \emph{non-degenerate} if it is both left and right non-degenerate. 

This can be also stated in the language of Universal Algebra. A 
solution 
is an algebra $(X, \cdot,\diamond)$ where $\cdot$ and $\diamond$ are binary operations satisfying the three identities
\begin{align}
     &\label{first'}
   x\cdot (y \cdot z)=(x\cdot y)\cdot((x\diamond y)\cdot z)\tag{Y'1}\\
    &  \label{second'} (x\cdot y)\diamond ((x\diamond y)\cdot z)=(x\diamond (y\cdot z))\cdot(y\diamond z)  \tag{Y'2}\\
      &\label{third'} (z\diamond y)\diamond x=(z\diamond(y\cdot x))\diamond(y\diamond x)\tag{Y'3}
  \end{align} 
for all $x, y, z \in X$. 
This is clearly related to biracks, see \cite{JePiZa19}, \cite{St06}, in the general setting as in \cite{St25}. 
Hence,  solutions 
form a variety. In this language, it is obvious that a solution $(X, \cdot,\diamond)$ 
is left non-degenerate if and only if $(X, \cdot)$ is a left quasigroup, and is right non-degenerate if and only if $(X, \diamond)$ is a right quasigroup. Moreover, if $(X, \cdot,\diamond)$ is a solution and either $(X, \cdot)$ or $(X, \diamond)$ are quasigroups, then $\cdot$ and $\diamond$
uniquely determine each other.

\smallskip

Special solutions 
can be studied and characterized in different ways, e.g., making use of particular algebraic structures. For instance, this occurs for the solutions~$(X,r)$ for which both mappings $\lambda\colon X\to X^X$ and $\rho\colon X\to X^X$ are constant mappings, that is, there exist $\sigma,\tau\colon X\to X$ such that $\lambda_x(y)=\sigma(y)$ and $\rho_y(x)=\tau(x)$ for all $x,y\in X$. Cf.~\emph{Lyubashenko's solutions} or \emph{permutation solutions} \cite[Example 2]{ESS99}.

\smallskip

Recall that two binary operations $\cdot,\diamond$ on a set $X$ satisfy the {\em interchange property} if 
$$
(x\cdot y)\diamond(w\cdot z)=(x\diamond w)\cdot(y\diamond z)\quad \mbox{\rm for every }x,y,w,z\in X.
$$ 
See \cite{Australian}.

\begin{proposition}\label{equiv-const-sol}
Let $X$ be a set and $\sigma,\tau\colon X\to X$ be two mappings. The following conditions are equivalent:
\begin{itemize}
    \item[\rm (a)] There exists a 
solution $(X,r)$ 
for which $\lambda_x(y)=\sigma(y)$ and $\rho_y(x)=\tau(x)$, for all $x,y\in X$. 

\item[\rm (b)] $\left(X,(\sigma\pi_2)\times(\tau\pi_1)\right)$ is a %set-theoretic solution of the Yang-Baxter equation.
solution.

\item[\rm (c)] $\sigma\tau = \tau\sigma$.

\item[\rm (d)] The binary operations $\sigma\pi_2$ and $\tau\pi_1$ satisfy the interchange property.
\end{itemize}
\end{proposition}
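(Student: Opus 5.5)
(a)$\Leftrightarrow$(b) is essentially tautological: if $r(x,y)=(\lambda_x(y),\rho_y(x))$ with $\lambda_x(y)=\sigma(y)$ and $\rho_y(x)=\tau(x)$, then $r(x,y)=(\sigma(y),\tau(x))$. This is exactly the map $(\sigma\pi_2)\times(\tau\pi_1)$, i.e. $(x,y)\mapsto(\sigma\pi_2(x,y),\tau\pi_1(x,y))$. So the solution in (a) is unique and coincides with the map in (b).

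For (b)$\Leftrightarrow$(c) I will use the three identities (Y1)--(Y3) with $\lambda_x=\sigma$ and $\rho_y=\tau$ for all $x,y$. Then (Y1) reads $\sigma\sigma(z)=\sigma\sigma(z)$ and (Y3) reads $\tau\tau(x)=\tau\tau(x)$, so both hold trivially. In (Y2) the left side is $\lambda_{(\dots)}\rho_z(y)=\sigma\tau(y)$ and the right side is $\rho_{(\dots)}\lambda_x(y)=\tau\sigma(y)$. So (Y2) holds for all $x,y,z$ if and only if $\sigma\tau=\tau\sigma$. As a sanity check, I will also compute both sides of (YBE) on $(x,y,z)$ directly. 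The left side should give $(\sigma^2 z,\sigma\tau y,\tau^2x)$ and the right side $(\sigma^2 z,\tau\sigma y,\tau^2 x)$, which confirms the equivalence.

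For (c)$\Leftrightarrow$(d), write $x\cdot y=\sigma(y)$ and $x\diamond y=\tau(x)$. Then
\[
(x\cdot y)\diamond(w\cdot z)=\tau(\sigma(y))
\quad\text{and}\quad
(x\diamond w)\cdot(y\diamond z)=\sigma(\tau(y)).
\]
So the interchange property holds for all $x,y,w,z$ if and only if $\tau\sigma=\sigma\tau$.

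I don't expect a real obstacle. The only care needed is the bookkeeping of which operation is $\sigma\pi_2$ and which is $\tau\pi_1$, and of the subscripts in (Y2); all of these become irrelevant because $\lambda$ and $\rho$ are constant.
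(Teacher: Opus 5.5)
Your proof is correct and follows the paper's argument. (a)$\Leftrightarrow$(b) is tautological. The Yang--Baxter identities (you use (Y1)--(Y3), the paper uses the equivalent operational form (Y'1)--(Y'3)) collapse to the single condition $\sigma\tau=\tau\sigma$ coming from (Y2), and the interchange property reduces to the same commutation.

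The only slip is in your optional sanity check, where the middle components are swapped. In fact $(r\times\id_X)(\id_X\times r)(r\times\id_X)$ sends $(x,y,z)$ to $(\sigma^2 z,\tau\sigma y,\tau^2 x)$, and $(\id_X\times r)(r\times\id_X)(\id_X\times r)$ sends it to $(\sigma^2 z,\sigma\tau y,\tau^2 x)$. This does not affect the conclusion.
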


\begin{proof} (a)${}\Leftrightarrow{}$(b) is trivial, because $\lambda_x=\sigma$ and $\rho_y=\tau$ if and only if $r\colon X\times X\to X\times X$ is equal to the product mapping $(\sigma\pi_2)\times(\tau\pi_1)$.

(a)${}\Leftrightarrow{}$(c) Replacing in (a) $\lambda_x(y)$ with $x\cdot y$ and $\rho_y(x)$ with $x\diamond y$, we have that (a) is equivalent to %(Y'1), (Y'2), (Y'3). 
\eqref{first'}, \eqref{second'}, \eqref{third'}. Now $x\cdot y=\sigma(y)$ and $x\diamond y=\tau(x)$, so that 
\eqref{first'} and \eqref{third'} always hold. Moreover 
\eqref{second'} can be rewritten as $\tau(\sigma(y))=\sigma(\tau(y)))$ for all $y\in X$, i.e., $\sigma$ and $\tau$ commute.
Therefore,  (a) is equivalent to (c).

(c)${}\Leftrightarrow{}$(d) The interchange property holds for $\sigma\pi_2$ and $\tau\pi_1$ if and only if $\sigma$ and $\tau$ commute.
\end{proof}

\begin{Ex}{\rm 
    Some further examples of %set-theoretic 
    solutions 
    %of the Yang-Baxter equation 
    can be constructed very easily. For instance, fix a set $X$ and a bijection $f\colon X\to X$. Then the mapping $r\colon X\times X\to X\times X$, defined by
    $r(a,b)=(f(b),b)$, for every $a,b\in X$, is a left non-degenerate 
    idempotent solution, namely, $r^2=r$. 
    
    To prove it, check that the three axioms %(Y'1), (Y'2) and (Y'3) 
    \eqref{first'}, \eqref{second'} and \eqref{third'} are satisfied for $a\cdot b=f(b)$ and $a\diamond b=b$, for all $a,b\in X$.}
\end{Ex}

\subsection{Left \texorpdfstring{$RG$-}{-}semibraces}\label{1.4}
  Let us briefly recall the main results obtained in \cite{AFMS}.  A {\em left $RG$-semibrace} is a triplet $(B,+,\circ)$, where $B$ is a set and $+,\circ$ are binary operations on $B$, such that both $(B,+)$ and $(B,\circ)$ are right groups 
and the identity
\begin{equation*} 
a\circ (b+ c) = a\circ b+(-a+ a\circ c)\label{bjip}
\end{equation*} 
holds for all $a,b,c\in B$, where $-a+ a\circ c$ is the unique element $x$ of $B$ such that $a+x=a\circ c$. That is, $-a+ a\circ c$ is  $ a\backslash (a \circ c)$, where $\backslash$ is the right inverse of the operation $+$ of the right group $(B,+)$. 

For a left $RG$-semibrace $(B,+,\circ)$, define a mapping $\lambda\colon B\to\Aut_\SGrp(B,+)$ setting $\lambda_a(b)=a\backslash (a \circ b)$, for all $a,b\in B$. Here $\SGrp$ is the category of semigroups. This mapping $\lambda$ turns out to be a semigroup homomorphism of the right group $(B,\circ)$ into the group $\Aut_\SGrp(B,+)$.

In a left $RG$-semibrace $(B,+,\circ)$, the set $F$ of all idempotents of the right group $(B,\circ)$ is always contained in the set $E$ of all idempotents of the right group $(B,+)$. There is a canonical projection $\pi_E\colon B\to E$ defined by $\pi_E(a)=a\backslash a$, for all $a\in B$. The projection $\pi_E$ is a semigroup homomorphism of the right group $(B,+)$ onto its subsemigroup $E$.

If we denote by $\backslash_\circ$ the right inverse of the operation $\circ$, we get a canonical projection $\pi_F\colon B\to F$ defined by $\pi_F(a)=a\backslash_\circ a$, for all $a\in B$, and $\pi_F$ turns out to be a semigroup homomorphism of the right group $(B,\circ)$ onto its subsemigroup~$F$. Moreover, in this notation, define 
 $\rho_b(a)=(\lambda_a(b))\backslash_\circ(a \circ b)$, for every $a,b\in B$. Then:

 \begin{theorem}\label{t}{\rm \cite[Theorem 3.5]{AFMS}}
    Let $B$ be a left $RG$-semibrace. The map $r_B: B \times B \to B \times B$, defined by
    \begin{align*}
        r_B(a,b)=\left(\lambda_a(b), \rho_b(a)\right)
    \end{align*}
    for all $a, b \in B$, is a left non-degenerate solution. 
    %of the Yang-Baxter equation. 
    \end{theorem}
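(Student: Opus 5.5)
We must verify (Y1), (Y2), (Y3) for $r_B(a,b)=(\lambda_a(b),\rho_b(a))$; left non-degeneracy is immediate, since each $\lambda_a$ lies in $\Aut_\SGrp(B,+)$ and is therefore a bijection. The plan is to extract everything from two basic facts recalled in Subsection~\ref{1.4}. First, $\lambda\colon(B,\circ)\to\Aut_\SGrp(B,+)$ is a semigroup homomorphism, so $\lambda_{a\circ b}=\lambda_a\lambda_b$. Second, by the definition of $\rho$ and the identity $x\circ(x\backslash_\circ y)=y$ in the left quasigroup $(B,\circ)$, we have the key factorization
\[
\lambda_a(b)\circ\rho_b(a)=a\circ b \qquad\text{for all } a,b\in B .
\]

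\emph{Step 1: (Y1).} Applying $\lambda$ to the factorization gives $\lambda_{\lambda_a(b)}\lambda_{\rho_b(a)}=\lambda_{a\circ b}=\lambda_a\lambda_b$, which is exactly (Y1).

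\emph{Step 2: (Y3).} I would reduce it to (Y2) via associativity of $\circ$. Compute $a\circ b\circ c$ in two ways, applying the factorization to adjacent pairs. Rewriting $a\circ b$ first, then $\rho_b(a)\circ c$, then $\lambda_a(b)\circ\lambda_{\rho_b(a)}(c)$, gives a triple product of the form $u_1\circ u_2\circ u_3$. Rewriting $b\circ c$ first and proceeding symmetrically gives $v_1\circ v_2\circ v_3$. The first components agree by (Y1): $u_1=v_1=\lambda_a\lambda_b(c)$. Left cancellativity of the right group $(B,\circ)$ (the maps $x\circ-$ are bijections) then yields $u_2\circ u_3=v_2\circ v_3$.

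\emph{Step 3: (Y2).} This is the expected main obstacle. In skew braces one uses $\rho_b(a)=\lambda_a(b)^{-1}\circ a\circ b$ and cancels on the right, but right cancellation fails in a right group. I would instead use the decomposition $B\cong G\times F$ of Theorem~\ref{1.1}(c) for $(B,\circ)$. An element $x\circ y$ is determined by $y$'s $F$-component together with the $G$-components. So $u_2\circ u_3=v_2\circ v_3$ with equal $G$-parts of $u_2,v_2$ would give (Y2) and (Y3) simultaneously, provided the $F$-components also match.

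To obtain the needed $\lambda$-compatibility, I would first show $\pi_F(\lambda_a(b))=\pi_F(b)$, or a similar identity, using the distributive law with $c$ idempotent in $F\subseteq E$. I would then compare first components directly: expand $\lambda_{\rho_{\lambda_b(c)}(a)}\rho_c(b)$ with the semibrace identity, which shows $\lambda_x(y\backslash_\circ z)$ behaves well. Failing that, I would try to prove (Y2) by direct computation with $\rho_b(a)=\lambda_a(b)\backslash_\circ(a\circ b)$ and the identity $x\backslash_\circ(y\backslash_\circ z)=(y\circ x)\backslash_\circ z$ from the lemma on right groups. Using that identity, both sides of (Y2) should reduce to $\lambda_{\lambda_a(b)}\lambda_{\rho_b(a)}(c)$ combined with a $\backslash_\circ$ of the common element $a\circ b\circ c$.
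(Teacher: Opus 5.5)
Your argument for left non-degeneracy, your Step~1 for (Y1), and your reduction of (Y3) to (Y2) are all correct. With $u_1=v_1=\lambda_a\lambda_b(c)$, left cancellation gives $u_2\circ u_3=v_2\circ v_3$. Here $v_2$ and $u_2$ are the two sides of (Y2), so once (Y2) holds, (Y3) follows by one more left cancellation. The gap is Step~3, and it is the only non-formal part of the theorem. As written it is not a proof.

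\begin{itemize}
\item The decomposition $(B,\circ)\cong G\times F$ only turns $u_2\circ u_3=v_2\circ v_3$ into $g_{u_2}g_{u_3}=g_{v_2}g_{v_3}$ and $\pi_F(u_3)=\pi_F(v_3)$. You give no reason why the $G$-parts or the $F$-parts of $u_2$ and $v_2$ should agree. Requiring both is just requiring $u_2=v_2$, i.e.\ (Y2) itself.
\item The auxiliary identity $\pi_F(\lambda_a(b))=\pi_F(b)$ that you propose is false. On $B=\mathbb{Z}_2\times\{1,2\}$ put $(a,e)+(b,e')=(a+b,e')$ and $(a,e)\circ(b,e')=(a+b,\tau^a(e'))$ with $\tau=(1\ 2)$. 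This is a left $RG$-semibrace with $F=\{(0,1),(0,2)\}$ and $\lambda_{(a,e)}(b,e')=(b,\tau^a(e'))$. For $x=(1,1)$ and $y=(0,1)\in F$ you get $\lambda_x(y)=(0,2)\in F$, so $\pi_F(\lambda_x(y))\neq\pi_F(y)$.
\item Your fallback predicts the wrong form. The right side of (Y2) is $\lambda_a\lambda_b(c)\backslash_\circ \lambda_a(b\circ c)$, not $\lambda_a\lambda_b(c)\backslash_\circ(a\circ b\circ c)$; the latter would need $\lambda_a(b\circ c)=a\circ b\circ c$.
\end{itemize}

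The missing idea is that (Y2) needs no right cancellation if you use the definition of $\rho$ on its right-hand side. By that definition and (Y1), $w:=\rho_{\lambda_{\rho_b(a)}(c)}\lambda_a(b)$ is the unique element with $\lambda_a\lambda_b(c)\circ w=\lambda_a(b)\circ\lambda_{\rho_b(a)}(c)$.

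For the right-hand side of this equation, use $x\circ y=x+\lambda_x(y)$, additivity of $\lambda_a$, $\lambda_{x\circ y}=\lambda_x\lambda_y$ and the factorization. It becomes $\lambda_a(b)+\lambda_{a\circ b}(c)=\lambda_a\bigl(b+\lambda_b(c)\bigr)=\lambda_a(b\circ c)$.

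For $v:=\lambda_{\rho_{\lambda_b(c)}(a)}\rho_c(b)$, first note that $\lambda_{\lambda_a\lambda_b(c)}\lambda_{\rho_{\lambda_b(c)}(a)}=\lambda_{a\circ\lambda_b(c)}$; this is the factorization for the pair $a,\lambda_b(c)$. Then
\[
\lambda_a\lambda_b(c)\circ v=\lambda_a\lambda_b(c)+\lambda_{a\circ\lambda_b(c)}\rho_c(b)=\lambda_a\bigl(\lambda_b(c)+\lambda_{\lambda_b(c)}\rho_c(b)\bigr)=\lambda_a\bigl(\lambda_b(c)\circ\rho_c(b)\bigr)=\lambda_a(b\circ c).
\]
Hence $v=w$ by left cancellation in $(B,\circ)$, which is (Y2). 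Your Step~2 then yields (Y3).
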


    For any binary operation $\circ$ on a set $X$, it is possible to define its opposite operation $\circ^{\op}$ by setting $a\circ^{\op}b:=b\circ a$, for all $a,b\in X$.
Of course, if $(X, \cdot,\diamond)$ is an algebra in which $\cdot$ and $\diamond$ are binary operations that satisfy the three identities %(Y'1), (Y'2) and (Y'3), 
\eqref{first'}, \eqref{second'} and \eqref{third'}, then $(X,\diamond^{\op}, \cdot^{\op})$ also 
satisfies %(Y'1), (Y'2) and (Y'3)
\eqref{first'}, \eqref{second'} and \eqref{third'}. 
There is an involutory category automorphism of the category of %set-theoretic 
solutions that sends each solution $(X, \cdot,\diamond)$ to the solution $(X,\diamond^{\op}, \cdot^{\op})$ and is the identity on solution morphisms.  Notice that axioms (%Y'1) and (Y'3) 
\eqref{first'} and \eqref{third'} correspond in this involutory category automorphism. Also, notice that axiom 
\eqref{first'} is exactly the left associativity of gyrogroups (left gyroassociative property), similarly \eqref{third'} is a right gyroassociative property, and \eqref{second'} is a sort of ``gyrointerexchange property''.

\begin{corollary}
    If $B$ is a left $RG$-semibrace, then the map $r'_B: B \times B \to B \times B$, defined by
    \begin{align*}
        r'_B(a,b)=\left(\rho_a(b), \lambda_b(a)\right)
    \end{align*}
    for all $a, b \in B$, is a right non-degenerate solution. %of the Yang-Baxter equation. 
    \end{corollary}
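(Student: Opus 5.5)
The plan is to obtain the corollary directly from Theorem~\ref{t} by means of the involutory category automorphism of solutions described just before the statement. First I will rewrite the solution $r_B$ of Theorem~\ref{t} in the language of Universal Algebra. Set $x\cdot y:=\lambda_x(y)$ and $x\diamond y:=\rho_y(x)$. Then Theorem~\ref{t} says that the algebra $(B,\cdot,\diamond)$ satisfies \eqref{first'}, \eqref{second'} and \eqref{third'}. Left non-degeneracy of $r_B$ says that $(B,\cdot)$ is a left quasigroup, that is, every $\lambda_x$ is bijective. This is already known, because $\lambda_a\in\Aut_\SGrp(B,+)$.

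Next I apply the observation made before the statement: $(B,\diamond^{\op},\cdot^{\op})$ again satisfies \eqref{first'}, \eqref{second'} and \eqref{third'}. It remains to identify the map $X\times X\to X\times X$ attached to this new algebra. With the dictionary used for \eqref{first'}--\eqref{third'}, the algebra $(X,\ast,\star)$ corresponds to $r(x,y)=(x\ast y,\,x\star y)$. For $\ast=\diamond^{\op}$ and $\star=\cdot^{\op}$ this gives
\[
r(a,b)=(a\diamond^{\op}b,\ a\cdot^{\op}b)=(b\diamond a,\ b\cdot a)=(\rho_a(b),\ \lambda_b(a)),
\]
which is exactly $r'_B(a,b)$. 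Hence $r'_B$ is a solution.

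Finally, right non-degeneracy means that the second operation $\cdot^{\op}$ is a right quasigroup operation, i.e.\ that the maps $a\mapsto a\cdot^{\op}b=b\cdot a=\lambda_b(a)$ are bijective for each $b$. This holds because each $\lambda_b$ is bijective. Equivalently, the ``$\rho$'' of $r'_B$ is the family $\lambda$, whose members are permutations of $B$.

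The only delicate step is the bookkeeping of the convention $r(x,y)=(x\cdot y,x\diamond y)$. I have to make sure that taking opposites and swapping the two operations produces $(\rho_a(b),\lambda_b(a))$ rather than some other permutation of the arguments. I will verify this by explicitly substituting into the definitions of $\lambda$ and $\rho$, as above. If I prefer not to rely on the stated correspondence, I can instead check directly that \eqref{first}--\eqref{third} for $r'_B$ are \eqref{third}, \eqref{second} and \eqref{first} for $r_B$ with the variables reversed; this is a routine relabelling $x\leftrightarrow z$.
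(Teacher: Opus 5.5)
Your proposal is correct and follows the route the paper intends. The corollary is placed right after the involutory automorphism $(X,\cdot,\diamond)\mapsto(X,\diamond^{\op},\cdot^{\op})$ and comes from applying it to the solution $r_B$ of Theorem~\ref{t}; right non-degeneracy then follows from bijectivity of the maps $\lambda_b$. Your bookkeeping is accurate: the new map is $(a,b)\mapsto(b\diamond a,\,b\cdot a)=(\rho_a(b),\lambda_b(a))$, i.e.\ $r'_B$ is $r_B$ conjugated by the flip.
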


Clearly, there is also an involutory category automorphism of the category of bijective set-theoretic solutions that sends each solution $(X, r)$ to the solution $(X,r^{-1})$.

\smallskip

Notice the relation between the concepts of right groups, left quasi-groups, left non-degenerate solutions, and right inverse operations. If $(B,+,\circ)$ is a left $RG$-semibrace and the operation $\cdot$ is defined by $a\cdot b=\lambda_a(b)$, for every $a,b\in B$, then it follows from \cite[Proposition 3.5]{AFMS} that $(B,\cdot)$ is a left quasigroup.
%To see it, fix $a$ and $b$ in $B$. Then $x$ is an element of $B$ such that $a\cdot x=b$ if and only if $-g_a+a\circ x=b$, if and only if $a\circ x=g_a+b$. Such an $x$ in $B$ exists and is unique because $(B,\circ)$ is a right group.
The definition of $\rho$ is
$\rho_a(b)=(a\cdot b)\backslash_\circ (a\circ b)$ in the right group $(B,\circ)$. 
Conversely, we have the following result.

\begin{proposition}\label{questa} Let $(B,+,\circ,\cdot,\diamond)$ be an algebra (in the sense of Universal Algebra) of type $(2,2,2,2)$ {\rm(}that is, the four operations $+,\circ,\cdot$ and $\diamond$ are all binary{\rm)}, satisfying the following conditions {\rm(}identities{\rm)}:
\begin{itemize}
\item[{\rm(a)}] $x+(x\cdot y)=x\circ y$ for all $x,y \in B$;

\item[{\rm(b)}] the three identities 
%{\rm (Y'1)}, {\rm  (Y'2)} and {\rm (Y'3)} 
\eqref{first'}, \eqref{second'} and \eqref{third'} hold;

\item[{\rm (c)}] $(x\cdot y)\circ(x\diamond y)=x\circ y$ for all $x,y\in B$.
\end{itemize}
\noindent If $(B,+)$ is a left quasigroup and the solution $r(x,y)=(x\cdot y,\,x\diamond y)$ is  left non-degenerate, then $(B,\cdot)$ and $(B,\circ)$ are also  left quasigroups, and the operations $\cdot$ and $\diamond$ are completely determined by the operations $+$ and $\circ$.
\end{proposition}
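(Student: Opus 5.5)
The plan is to treat the three claims in turn: first $(B,\cdot)$, then $(B,\circ)$, then the determination of $\cdot$ and $\diamond$. Throughout, let $\backslash$ denote the right inverse of $+$, which exists because $(B,+)$ is a left quasigroup.

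\emph{Step 1: $(B,\cdot)$ is a left quasigroup.} By the language of Universal Algebra recalled in Subsection~\ref{page5}, left non-degeneracy of $r(x,y)=(x\cdot y,\,x\diamond y)$ says exactly that every $\lambda_x=x\cdot-$ is a bijection of $B$. So $(B,\cdot)$ is a left quasigroup with no further work.

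\emph{Step 2: $(B,\circ)$ is a left quasigroup.} Identity (a) gives $\ell^{\circ}_x = \ell^{+}_x\circ \lambda_x$, where $\ell^{\circ}_x=x\circ-$ and $\ell^{+}_x=x+-$. Both factors are bijections, the first because $(B,+)$ is a left quasigroup and the second by Step~1. Hence $\ell^\circ_x$ is bijective for every $x$, and $(B,\circ)$ is a left quasigroup. Its right inverse $\backslash_\circ$ is then available.

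\emph{Step 3: $\cdot$ and $\diamond$ are determined by $+$ and $\circ$.} Applying $x\backslash-$ to (a) gives
\[ x\cdot y = x\backslash(x\circ y), \]
so $\cdot$ is determined by $+$ and $\circ$. For $\diamond$, identity (c) reads $(x\cdot y)\circ(x\diamond y)=x\circ y$. Since $(B,\circ)$ is a left quasigroup, by the Lemma characterizing left quasigroups through unique solvability of $a\cdot x=b$, there is a unique solution of $(x\cdot y)\circ z=x\circ y$, and $x\diamond y$ must be it. Therefore
\[ x\diamond y = (x\cdot y)\backslash_\circ(x\circ y) = \bigl(x\backslash(x\circ y)\bigr)\backslash_\circ(x\circ y), \]
and $\diamond$ is also determined by $+$ and $\circ$.

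Hypothesis (b) (the identities \eqref{first'}--\eqref{third'}) is not needed for these conclusions, except that it is what makes $r$ a solution, so that the term ``left non-degenerate solution'' makes sense. There is no real obstacle here. The only point needing care is Step~2: it rests on the composition $\ell^\circ_x=\ell^+_x\lambda_x$ and on using bijectivity of the individual maps $\lambda_x$, not merely of the map $\lambda$ itself.
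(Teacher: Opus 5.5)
Your proposal is correct and follows the paper's proof essentially step for step. Left non-degeneracy gives $(B,\cdot)$; identity (a) makes each $x\circ-$ the composite of the bijections $x+-$ and $x\cdot-$, which the paper phrases as counting the solutions of $a\circ x=b$. Solving (a) and (c) through the right inverses $\backslash$ and $\backslash_\circ$ then determines $\cdot$ and $\diamond$.
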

\begin{proof} The fact that the solution $r$ is left non-degenerate is equivalent to the fact that $(B,\cdot)$  is a left quasigroup.

The operation $\cdot$ is completely determined by the operations $+$ and $\circ$ by the identity in (a) and the fact that $(B,+)$ is a left quasigroup.

Let us prove that $(B,\circ)$ is a left quasigroup. Let $a,b$ be two elements of $B$. We want to count the elements $x\in B$ such that $a\circ x=b$. If $x$ is such that $a\circ x=b$, then $a+(a\cdot x)=b$. Since $(B,+)$ is a left quasigroup, there is only one element $c\in B$ such that $a+c=b$.  Since $(B,\cdot)$  is a left quasigroup, there is a unique element $x\in B$ such that $a\cdot x=c$. This proves that there exists a unique element $x\in B$ such that $a\circ x=b$: if $c\in B$ is the unique element such that $a+c=b$, then $x\in B$ is the unique element such that $a\cdot x=c$.  
Therefore, $(B,\circ)$ is a left quasigroup. 

Now, from (c) and what we have proved in the previous two paragraphs, it follows that
the operation $\diamond$ is completely determined by the operations $+$ and~$\circ$. 
\end{proof}

\begin{remark}
In the notations of Proposition~\ref{questa}, if $(B,\cdot)$ is a quasigroup, there is a unique binary operation $\diamond$ on $B$ for which the map $r\colon B\times B\to B\times B$ defined by $r(x,y)=(x\cdot y, \, x\diamond y)$ is a solution, by %(Y'1). 
\eqref{first'}. 
This solution $r$ is necessarily left non-degenerate. Dually, if $(B,\cdot)$ is a quasigroup, there is a unique binary operation $\cdot$ on $B$ for which $r\colon B\times B\to B\times B$ defined by $r(x,y)=(x\cdot y, \, x\diamond y)$ is a solution, by %(Y3). 
\eqref{third'}. This solution $r$ is necessarily right non-degenerate. 
\end{remark}

Given a semigroup $(S,\cdot)$, we will denote by $\E(S,\cdot)$ the set of all idempotents of~$S$. 
Notice that 
the direct-product representation of a right group $S$ as the direct product of a group $G$ and a non-empty right zero semigroup $E=\E(S,\cdot)$ in 
Theorem~\ref{1.1}(c)
is rather particular because 
$G$ and $E$ are both isomorphic to homomorphic images and subsemigroups of a right group $S$. Namely, %if $\sigma_e$ is the idempotent endomorphism of $S$ defined by setting $\sigma_e(s) = s \cdot e$, for all $s \in S$ and every idempotent $e \in S$, then the image of $\sigma_e$ is the subgroup $S \cdot e$ of $S$ that is isomorphic to $G$.
for every idempotent $e$ of $S$, there is an idempotent endomorphism $\sigma_e$ of the semigroup $S$  defined by $\sigma_e(s)=s\cdot e$, for every $s\in S$, whose image is the subgroup $S\cdot e$ of $S$, with $S\cdot e\cong G$. 
In the same way, if $\ell_s:S\to S$ is the map defined by $\ell_s(a) = s\cdot a$, for all $a,s\in S$, by Remark~\ref{1.1}(b), there is an idempotent endomorphism $\tau$ of the semigroup $S$ defined by $\tau(s)=s\backslash s$, for every $s\in S$, and the image of $\tau$ is the subsemigroup $E$ of $S$.  Images of idempotent semigroup endomorphisms of a semigroup $S$ are at the same time subsemigroups of $S$ and homomorphic images of $S$. 
Also, if $\Delta\colon S\to S\times S$ and $\mu\colon S\times S\to S$ are the mappings defined by $\Delta(s) = (s,s)$ and $\mu(s,s')=s\cdot s'$, for all $s,s'\in S$, respectively, then 
$
\mu(\sigma_e\times\tau)\Delta=\id_S,
$
for every $e\in E$. Moreover, $\sigma_e\tau=\tau\sigma_e=c_e$, where $c_e\colon S\to S$ is the mapping constantly equal to $e$. In categorical language, $S$ is the product of $G$ and $E$ in the category of semigroups, and $(S,\cdot, e)$ is the coproduct of $(G\cdot e,\cdot, e)$ and $(E,\pi_2,e)$ in the category of pointed right groups (see Definition~\ref{deff}). %Cf.~\cite{FacFin}. 
Thus $(S,\cdot, e)$ is the biproduct of $(G\cdot e,\cdot, e)$ and $(E,\pi_2,e)$ in the category of pointed right groups. Note that, in general,  $\mu(\sigma\times\tau_e)\Delta$ is different from $\id_S$.

\smallskip

Some care is necessary in dealing with congruences and homomorphisms of left quasigroups and right groups. For a right group $(S,\,\cdot)$, it is possible to factor out $S$ modulo any equivalence relation $\sim$ on $S$ compatible with the operation $\cdot$ getting a semigroup $S/{\sim}$. If $(S,\,\cdot)$ is a right group and $\sim$ is an equivalence relation on $S$ compatible with both the operation $\cdot$ and its right inverse operation $\backslash$, then $S/{\sim}$ is a right group.

As far as morphisms are concerned, the category $\RGrp$ of right groups is a full subcategory of the category $\SGrp$ of semigroups, that is, every semigroup morphism between right groups also respects the operation $\backslash$, i.e., for all right groups $S,S'$ and all semigroup morphism $f\colon S\to S'$, one necessarily has that $f(s_1\backslash s_2)=f(s_1)\backslash f(s_2)$, for every $s_1,s_2\in S$.
In the category $\LQGrp$ of left quasigroups the morphisms are the mappings compatible with both the operations~$\cdot$ and~$\backslash$. By Remark~\ref{1}, there is an involutory category isomorphism $\LQGrp\to \LQGrp$ that sends each left quasigroup $(Q, \cdot,\backslash)$ to the left quasigroup $(Q, \backslash, \cdot)$ and is the identity on left quasigroup morphisms.

For any right group $(S,\,\cdot,\,\backslash)$, let $E$ denote the set of all idempotents of $S$, and
 fix an element $e_0\in E$. Define an equivalence relation $\sim $ on $S$ setting $a\sim b$ if and only if $a\cdot e_0=b\cdot e_0$, for every $a,b\in S$. The equivalence $\sim$ does not depend on the choice of the idempotent $e_0$, since, for every $a,b\in S$ and every $e\in E$, one has that $a\cdot e=b\cdot e$ if and only if $a\cdot e_0=b\cdot e_0$. 
The equivalence $\sim$ is compatible with both the operations $\cdot$ and $\backslash$ on $S$. 
The equivalence class of any $a\in S$ modulo $\sim$ is $a\cdot E:=\{\,a\cdot e\mid e\in E\,\}$, so that there is a partition $\{\,a\cdot E\mid a\in S\,\}$ of $S$. A complete irredundant set of representatives of the congruence classes of $S$ modulo $\sim$ is the set $S\cdot e_0:=\{\,a\cdot e_0\mid a\in S\,\}$, which is a subgroup of $(S,\,\cdot)$ with identity $e_0$. Moreover, $S\cdot e_0$ and $S/{\sim}$ are canonically isomorphic groups.

\smallskip

For our computations in the next two sections, given a right group $(S,+)$, it is convenient to fix an idempotent element in $S$, and denote it by $0$. Thus, as we have already said, in the category of pointed semigroups, $(S,+,0)$ is the coproduct of its pointed subsemigroups $(S+0,\cdot,0)$, which is a group, and $(E,\pi_2,0)$, the set of all idempotent elements in $(S,+)$, which is a right zero semigroup. Moreover, we can write any element $b\in S$ in a unique way as $$b = g_b + e_b,$$
where $g_b=b+0\in S+0$ and $e_b=b\backslash b\in E$. Here, $\backslash$ is the right inverse operation of the operation $+$ of the right group $(B,+)$. 

\section{Right heaps}

For a left $RG$-semibrace $(B,+,\circ)$, we have seen that  skew left distributivity\linebreak $a\circ (b+c)=a\circ b+(a\backslash (a\circ c))$, the definition $\lambda_a(b)=a\backslash (a\circ b)$, and the definition $\rho_b(a)=(\lambda_a(b))\backslash_\circ (a\circ b)$ are expressed making use of the operations $+$ and $\circ$ of the left $RG$-semibrace and their right inverse operations $\backslash$ and $\backslash_\circ$. 

Recall that a {\em skew left truss} \cite[Definition 2.1]{Brze} is a quadruple $(A,+,\circ,\sigma)$ where $(A,+)$ is a group, $(A,\circ)$ is a semigroup, $\sigma\colon A\to A$ is a unary operation, and the {\em truss distributive law} is satisfied, namely,
$$
a\circ(b+c)=a\circ b-\sigma(a)+a\circ c
$$ 
for all $a,b,c\in A$. Now (see \cite[p.~4151 and Remark~2.6]{Brze}), 
the truss distributive law is equivalent to the distributive law
$$
a \circ [b, c, d] = [a\circ b,\, a \circ c,\, a \circ d]
$$ 
for all $a, b, c, d\in A$.
Moreover (see \cite[Remark~1.5]{AFMS}), if $B$ is a left $RG$-semibrace and $f$ is any element of $B$ with $f\circ f=f$, then $f+f=f$, so that the mapping $\sigma_f\colon B\to B$, given by $\sigma_f(a)
= a + f$ for all $a\in B$, is an idempotent endomorphism of the additive right group $(B,+)$. In this notation, skew left distributivity can be rewritten as 
$$
a\circ (b+ c) = a\circ b -\sigma_f(a) + a\circ c,
$$ 
for all $a,b,c\in B$, where $y:= -\sigma_f(a) + a\circ c$ is the unique element in $B$ such that $\sigma_f(a) + y = a\circ c$. Thus, the definition of left $RG$-semibrace is in relation to that of skew left trusses, hence with heaps \cite[Section~2]{Brze}. This leads us to the study of {\em right} heaps, the analogues for right groups of what heaps are for groups. It is then natural to ask whether there is a relation between left $RG$-semibraces and {\em left $RG$-semitrusses}, as in the case of skew left trusses \cite[Theorem~2.5]{Brze}. We will see in Theorem~\ref{2.14} that this is the case. 

\smallskip

Let us briefly recall what the situation is for groups and heaps. 
\begin{definition}
    An algebra $(H, [-,-,-])$, where  $[-,-,-]\colon H\times H\times H\to H$  is a  ternary operation on $H$, is said to be a {\em heap} if the following identities are satisfied:
    \begin{itemize}
        \item[(1)] $[x,x,y] = y$ and $[x,y,y] = x$ for every $x,y\in H$, i.e., $[-,-,-]$ is a {\em Mal'tsev operation},
        
        \item[(2)] $[[x,y,z],w,u]=[x,y,[z,w,u]]$ for every $x,y,z,w,u\in H$, i.e., $[-,-,-]$ is an {\em associative} ternary operation.
    \end{itemize}
\end{definition}

One of the best examples of a heap is, for any two algebras $A$ and $B$ in a variety, the set of all isomorphisms from $A$ to $B$ under the ternary operation $[f,g,h]=fg^{-1}h$, e.g., the set of all bijections of a set $X$ onto a set $Y$. 
This is really one of the best examples, first of all because the set of all isomorphisms from $A$ to $B$ can be empty; moreover, because there is no canonical isomorphism from $A$ to $B$, there is no identity isomorphism from $A$ to $B$.

 %\smallskip
 
 Heaps form a variety in the sense of Universal Algebra, hence a category $\Hp$ in which morphisms $(H,[-,-,-])\to(H',[-,-,-])$ are the mappings $f\colon H\to H'$ for which $f([x,y,z)=[f(x), f(y), f(z)]$ for all $x,y,z\in H$. 

It is often convenient to replace a ternary operation $[-,-,-]$ on a set $X$ with an indexed family $\{\,{\bi}_y\mid y\in X\,\}$ of binary operations ${\bi}_y\colon X \times X\to X$, defined by $x{\bi}_yz=[x,y,z]$ for every $x,y,z\in X$. Correspondingly, we get a family of magmas $(X,{\bi}_y)$, which is indexed in $X$ itself. 
It is easily seen that a ternary operation $[-,-,-]$ on a set $X$ is a Mal'tsev operation if and only if, for the corresponding indexed family $\{\,{\bi}_y\mid y\in X\,\}$ of binary operations, the element $y$ is a two-sided identity of the magma $(X,{\bi}_y)$, for every $y\in X$. Notice that, in a magma, a two-sided identity, when it exists, is unique. Also, if  a ternary operation $[-,-,-]$ on a set $X$ is associative, then all the binary operations ${\bi}_y$ are associative, that is, all the magmas $(X,{\bi}_y)$ are semigroups. Finally, if $(X,[-,-,-])$ is a heap, then all the monoids $(X,{\bi}_y)$ are pair-wise isomorphic groups. The group isomorphisms $(X,{\bi}_x)\to (X,{\bi}_y)$ are the mappings $\tau_x^y\colon (X,{\bi}_x)\to (X,{\bi}_y)$ defined by $\tau_x^y(z)=[z,x,y]$, for every $x,y,z\in X$.

There is a canonical functor $U\colon \Gp\to \Hp$ that maps any group $(G,+)$ to the heap $(G,[-,-,-])$, where the ternary operation $[-,-,-]$ on $G$ is defined by $[a,b,c]=a-b+c$,  for all $a,b,c\in G$. This functor $U$ is faithful, but is not a category equivalence between the categories $\Gp$ and $\Hp$. This occurs for two reasons: on the one hand, a heap can be empty; on the other hand, even if we eliminate the empty heap, the categories $\Gp$ and $\Hp_{\ne\emptyset}$ of non-empty heaps are not equivalent. 

In order to get a category equivalent to the category $\Gp$, we must pass to the category $\Hp_*$ of all {\em pointed heaps}, whose objects are the triplets $(H,[-,-,-],h)$, where $(H,[-,-,-])$ is a heap and $h$ is a fixed element of $H$. Morphisms $$(H,[-,-,-],h)\to (H',[-,-,-],h')$$ in $\Hp_*$ are the heap morphisms 
$$
f\colon (H,[-,-,-])\to(H',[-,-,-])
$$ 
such that $f(h)=h'$. There is a category isomorphism $U_*\colon \Gp\to \Hp_*$ that associates to each group $(G,+)$ with identity (zero element) $0_G$ the pointed heap $(G,[-,-,-],0_G)$ where $[-,-,-]$ is defined by $[a,b,c]=a-b+c$, for every $a,b,c\in G$. The functor $U_*$ is the identity on morphisms. The inverse of $U_*$ is the functor $\Hp_*\to\Gp$ that associates to each pointed heap $(H,[-,-,-],h)$ the group $(H,{\bi}_h)$.

\smallskip

Let us see how this setting generalizes from groups to right groups. 

\begin{lemma}\label{rightheap} 
Let $(B,+,\backslash)$ be a right group, $0$ be an idempotent element of $(B,+)$, and $[-,-,-]$ be the ternary operation on $B$ defined by $[a,b,c]=a+(b\backslash c)$, for all $a,b,c\in B$. Then:
\begin{itemize}
    \item[\rm (1)] The ternary operation $[-,-,-]$ is associative.

\item[\rm (2)] The ternary operation $[-,-,-]$ is {\em left Mal'tsev}, that is, $[x,x,y]=y$ for every $x,y\in B$. 

\item[\rm (3)] The ternary operation $[-,-,-]$ is {\em right weakly Mal'tsev}, that is, $$[x,y,[y,z,w]]=[x,z,w]$$ for every $x,y,z,w\in B$. 

\item[\rm (4)] $[[x,y,z],w,u]=[x,[w,z,y],u]$ for every $x,y,z,w,u\in B$. 

\item[\rm (5)] For every $a,b\in B$, the mapping $[a,b,-]\colon B\to B$ is a bijection. Equivalently, for every $a,b,c\in B$, there is a unique element $x\in B$ such that $[a,b,x]=c$. 

\item[\rm (6)] All the semigroups $(B,{\bi}_y)$, $(y\in B)$, and $(B,+)$ are isomorphic. The semigroup isomorphism $(B,+)\to (B,{\bi}_y)$ is the map $\sigma_0^y\colon (B,+)\to (B,{\bi}_y)$ defined by 
$$
\sigma_0^y(z):=[[z,0,y],z,z]=z+y+(z\backslash z)
$$ 
for every $z\in B$.

\item[\rm (7)] For every $y\in B$, the right group $(B,{\bi}_y)$ contains as subsemigroups: \\
{\rm (a)}  $B\,{\bi}_y\,y=B+e_y$, which is a group with identity $y$ with respect to the operation ${\bi}_y$;\\
{\rm (b)} $\E(B,{\bi}_y)=g_y+\E(B,+)$, which is a right zero semigroup. The right group $(B,{\bi}_y)$ is the direct product of its homomorphic images $B\,{\bi}_y\,y$ and $\E(B,{\bi}_y)$. \\The canonical projections are defined by 
% $[-,y,y]\colon (B,{\bi}_y)\to B\,{\bi}_y\,y$, $b\mapsto b\,{\bi}_y\,y=[b,y,y]=b+e_y$ and $(B,{\bi}_y)\to \E(B,{\bi}_y)=g_y+\E(B,+)$, $b\mapsto g_y+e_b=$``the unique element $x\in B$ such that $[b,y,x]=b$''.
$$[-,y,y]\colon (B,{\bi}_y)\to B\,{\bi}_y\,y,\, b\mapsto b\,{\bi}_y\,y=[b,y,y]=b+e_y
$$ 
and 
$$
(B,{\bi}_y)\to \E(B,{\bi}_y)=g_y+\E(B,+),\, b\mapsto g_y+e_b
$$
where $g_y+e_b =$  ``the unique element $x\in B$ such that $[b,y,x]=b$''.
\end{itemize}
\end{lemma}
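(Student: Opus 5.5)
The plan is to transport everything to coordinates using Theorem~\ref{1.1}(c) and the decomposition $b=g_b+e_b$ fixed at the end of Section~\ref{2}. Identify $(B,+)$ with $G\times E$, where $G=B+0$ is a group written multiplicatively with identity $0$ and $E=\E(B,+)$ is a right zero semigroup. Then
$$(g,e)+(h,f)=(gh,f).$$
The right inverse is forced by the requirement $b+(b\backslash c)=c$:
$$(g,e)\backslash(h,f)=(g^{-1}h,f).$$
Consequently the ternary operation becomes
$$[a,b,c]=\bigl(g_a g_b^{-1} g_c,\; e_c\bigr).$$
That is, it is the heap operation of $G$ in the first coordinate and $\pi_3$ in the second.

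With this formula, (1)--(4) reduce to one-line checks. For (1), both bracketings of a five-fold expression give $(g_xg_y^{-1}g_zg_w^{-1}g_u,\,e_u)$. For (2), $[x,x,y]=(g_y,e_y)=y$. For (3), both sides of $[x,y,[y,z,w]]=[x,z,w]$ equal $(g_xg_z^{-1}g_w,\,e_w)$. For (4), both $[[x,y,z],w,u]$ and $[x,[w,z,y],u]$ equal $(g_xg_y^{-1}g_zg_w^{-1}g_u,\,e_u)$, because $(g_wg_z^{-1}g_y)^{-1}=g_y^{-1}g_zg_w^{-1}$.

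If a coordinate-free argument is preferred, (1) can instead be derived from the identity $b\backslash(c+u)=(b\backslash c)+u$, which follows from uniqueness since $b+((b\backslash c)+u)=c+u$. Item (4) can be derived from part (4) of the lemma characterizing right groups among left quasigroups, namely $(w+v)\backslash u=v\backslash(w\backslash u)$. I will mention this route but rely on coordinates.

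For (5), the equation $[a,b,x]=c$ means $a+(b\backslash x)=c$. This holds if and only if $b\backslash x=a\backslash c$, that is, $x=b+(a\backslash c)$, using the left quasigroup identities twice. This gives existence and uniqueness.

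For (6), compute $z\,{\bi}_y\,w=(g_zg_y^{-1}g_w,\,e_w)$. Since $0=(1,e_0)$, the map $\sigma_0^y(z)=[[z,0,y],z,z]$ works out to $(g_zg_y,\,e_z)$. One checks that this agrees with $z+y+(z\backslash z)$, because $z\backslash z=(1,e_z)$. The homomorphism property is then
$$\sigma_0^y(z)\,{\bi}_y\,\sigma_0^y(w)=(g_zg_yg_y^{-1}g_wg_y,\,e_w)=\sigma_0^y(z+w).$$
Bijectivity follows from the explicit inverse $(g,e)\mapsto(gg_y^{-1},e)$. In particular, each $(B,{\bi}_y)$ is a right group isomorphic to $(B,+)$.

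For (7), the identity $b\,{\bi}_y\,y=[b,y,y]=(g_b,e_y)=b+e_y$ shows that $B\,{\bi}_y\,y=B+e_y=G\times\{e_y\}$. In the ${\bi}_y$ operation this set is a copy of $G$ with identity $(g_y,e_y)=y$. Next, an element $(h,f)$ is ${\bi}_y$-idempotent if and only if $hg_y^{-1}h=h$, that is, $h=g_y$. So
$$\E(B,{\bi}_y)=\{g_y\}\times E=g_y+\E(B,+),$$
and this set is visibly right zero. The two projections $b\mapsto(g_b,e_y)$ and $b\mapsto(g_y,e_b)$ are homomorphisms for ${\bi}_y$, and jointly they recover $b$. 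Finally, solving $[b,y,x]=b$ gives $g_bg_y^{-1}g_x=g_b$ and $e_x=e_b$, so $x=(g_y,e_b)=g_y+e_b$, matching the description in the statement.

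I expect the only real obstacle to be bookkeeping in (6) and (7). Specifically, I must make sure the paper's expressions, such as $z+y+(z\backslash z)$ and $g_y+e_b$, translate correctly into coordinates, which depends on writing $0=(1,e_0)$ and $z\backslash z=(1,e_z)$ correctly. Everything else is immediate from the formula for $[a,b,c]$.
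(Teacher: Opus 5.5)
Your proposal is correct and follows essentially the same route as the paper. The paper also fixes the idempotent $0$, writes $x=g_x+e_x$, and reduces everything to $[a,b,c]=g_a-g_b+g_c+e_c$ (your $(g_ag_b^{-1}g_c,e_c)$), then checks (1)--(6) by direct computation; your explicit verification of (7), which the paper leaves as routine, and your direct solution of (5) via the left quasigroup identities are sound minor variations.
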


\begin{proof} Fix any idempotent element $0\in B$, so that we have a unique decomposition $x=g_x+e_x$, for every element $x\in B$, and the definition $[a,b,c]=a+(b\backslash c)$ can be rewritten as $[a,b,c]=a-g_b+c$, or as $[a,b,c]=g_a-g_b+g_c+e_c$.

(1) We must prove that $[[a,b,c],d,e]=[a,b,[c,d,e]]$, for every $a,b,c,d,e\in B$. The equality $[[a,b,c],d,e]=[a,b,[c,d,e]]$ can be rewritten as $$(a-g_b+c)-g_d+e=a-g_b+(c-g_d+e),$$ which is clearly true for the associativity of the addition $+$.

(2) follows from $[x,x,y]=x-g_x+y=0+y=y$ for every $x, y \in B$.

(3) The right weakly Mal'tsev property is equivalent to $x-g_y+(y-g_z+w)=x-g_z+w$ for all $x,y,z,w \in B$.

(4) follows from $(g_x-g_y+g_z)-g_w+u=(g_x-(g_w-g_z+g_y)+u$ for every $x,y,z,w,u \in B$.

(5) Since $(B,+)$ is a right group, for every $d\in B$ the mapping $\ell_d\colon B\to B$, $x\mapsto d+x$ is a bijection. Equivalently, for every $d,c\in B$, there is a unique element $x\in B$ such that $d+x=c$. Replace $d$ in these conditions with $a-g_b$ getting statement (5).

(6)
Fix an element $y\in B$. Consider the mapping  $\sigma_0^y\colon (B,+)\to (B,{\bi}_y)$, $\sigma_0^y(z)=[[z,0,y],z,z]=z+y+(z\backslash z)=g_z+g_y+e_z$. Then $\sigma_0^y$ is injective, because if $\sigma_0^y(z)=\sigma_0^y(z')$, then $g_z+g_y+e_z=g_{z'}+g_y+e_{z'}$, so that $g_z+g_y=g_{z'}+g_y$ and $e_z=e_{z'}$. Therefore,  $g_z=g_{z'}$ and $e_z=e_{z'}$, which imply $z=z'$. Also, $\sigma_0^y$ is surjective, because if $z\in B$, then $\sigma_0^y(g_z-g_y+e_z)=z$. Finally, $\sigma_0^y$ is a semigroup homomorphism of $(B,+)$ onto $(B,{\bi}_y)$, because for every $z,z'\in B$ one has that $\sigma_0^y(z+z')=g_z+g_{z'}+g_y+e_{z'}$ and $\sigma_0^y(z)\,{\bi}_y\,\sigma_0^y(z')=g_z+g_y+e_z-g_y+g_{z'}+g_y+e_{z'}=g_z+g_{z'}+g_y+e_{z'}$.%=\sigma_0^y(z+z')$.

The proof of (7) consists of routine calculations. 
 \end{proof}

\begin{definition}\label{deff} 
{\rm A {\em right heap} is a pair $(H,[-,-,-])$, where $H$ is a set and $[-,-,-]$ is a ter\-nary operation on $H$ which is associative, left Mal'tsev, and right weakly Mal'tsev. In the category of right heaps, the {\em morphisms} $$f\colon (H,[-,-,-])\to(H',[-,-,-])$$ between two right heaps are the mappings $f\colon H\to H'$ for which $f([x,y,z])=[f(x), f(y), f(z)]$, for all $x,y,z\in H$.}

{\rm A {\em pointed right heap} is a triplet $(H,[-,-,-], h)$, where $(H,[-,-,-])$ is a right heap and $h$ is a fixed element of $H$. Morphisms of pointed right heaps $(H,[-,-,-], h)\to (H',[-,-,-], h')$ are the morphisms of right heaps that map $h$ to $h'$.}

{\rm A {\em pointed right group} is a triplet $(B,+,e)$, where $(B,+)$ is a right group and $e$ is a fixed idempotent element of $B$. Morphisms of pointed right groups $(B,+,e)\to (B',+,e')$ are the semigroup morphisms that map $e$ to $e'$.}
\end{definition}

\begin{Exs}\label{primi}
 (1) If $(G,+)$ is a group, then $(G,[-,-,-])$ is a heap, that is, the operation $[-,-,-]$ is also {\em right Mal'tsev}, i.e., $[x,y,y]=x$ for every $x,y\in G$.

(2) If $(B,+)$ is a right zero semigroup, then $[-,-,-]$ is the third projection $\pi_3\colon B\times B\times B\to B$, that is $
[x,y,z]=z$
for every $x,y,z\in B$. For any set $X$ let $[-,-,-]\colon X\times X\times X\to X$ be the third canonical projection $\pi_3\colon X\times X\times X\to X$. Then $(X,\pi_3)$ is a right heap, which is not a heap if $X$ has at least two distinct elements. We will call these right heaps {\em right zero heaps}. The full subcategory of the category $\Hp$ whose objects are all right zero heaps is clearly isomorphic to the category $\Set$ of sets.
\end{Exs}

\begin{lemma}\label{vip} 
For every $a,b$ in a right heap $(H,[-,-,-])$, the mapping $$[a,b,-]\colon H\to H$$ is a bijection. Its inverse is the mapping $[b,a,-]\colon H\to H$. Therefore, for every $a,b,c\in H$, there is a unique element $x\in H$ such that $[a,b,x]=c$. It is $x=[b,a,c]$.\end{lemma}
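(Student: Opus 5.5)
We show directly that $[a,b,-]$ and $[b,a,-]$ are mutually inverse maps $H\to H$, using only associativity and the left Mal'tsev identity of Definition~\ref{deff}. The uniqueness statement then follows immediately.

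First, for every $c\in H$, associativity gives
$$
[a,b,[b,a,c]]=[[a,b,b],a,c].
$$
We cannot simplify $[a,b,b]$ here, since right Mal'tsev fails in general. So instead we use the right weakly Mal'tsev identity $[x,y,[y,z,w]]=[x,z,w]$ with $x=a$, $y=b$, $z=a$, $w=c$. It yields
$$
[a,b,[b,a,c]]=[a,a,c]=c,
$$
where the last step is left Mal'tsev. Thus $[a,b,-]\circ[b,a,-]=\id_H$.

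Symmetrically, applying the right weakly Mal'tsev identity with the roles of $a$ and $b$ exchanged ($x=b$, $y=a$, $z=b$, $w=c$) gives
$$
[b,a,[a,b,c]]=[b,b,c]=c,
$$
so $[b,a,-]\circ[a,b,-]=\id_H$. Hence $[a,b,-]$ is a bijection with inverse $[b,a,-]$.

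For the last assertion, $[a,b,x]=c$ holds if and only if $x=[a,b,-]^{-1}(c)=[b,a,c]$. This gives both existence and uniqueness of $x$.

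There is no real obstacle. The only point needing care is to invoke the right weakly Mal'tsev identity, rather than an unavailable right Mal'tsev identity, when collapsing the nested bracket; associativity is not actually needed.
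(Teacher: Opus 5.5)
Your proof is correct and is exactly the paper's argument: right weakly Mal'tsev followed by left Mal'tsev gives $[a,b,[b,a,c]]=[a,a,c]=c$, and symmetrically $[b,a,[a,b,c]]=[b,b,c]=c$. Only your opening sentence needs correcting: it says you use only associativity and left Mal'tsev, but the proof actually uses right weakly Mal'tsev and left Mal'tsev, and the associativity step is never used.
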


\begin{proof} We have that  $[a,b,[b,a,c]]=[a,a,c]=c$ and $[b,a,[a,b,c]]=[b,b,c]=c$, so that the two mappings $[a,b,-],\,[b,a,-]\colon H\to H$ are mutually inverse.\end{proof}

\begin{remark}If, in a heap $H$, we have that $[a,b,c]=d$, then any three of the four elements $a,b,c,d\in H$ determine the fourth. In fact, $[a,b,c]=d$ implies $a=[d,c,b]$, $b=[c,d,a]$ and $c=[ b,a,d]$. Equivalently, for $a,b\in H$, all the mappings $$[-,a,b],[a,-,b],[a,b,-]\colon H\to H$$ are bijections, whose inverses are $[-,b,a], \ [b,-,a]$ and $[b,a,-]$, respectively. For right groups $(B,+)$ and right heaps $(H,[-,-,-])$, we have, by Example~\ref{primi}(2) and Lemma~\ref{vip}, that $[a,b,-]$ is a bijection, while $[-,a,b]$ and $[a,-,b]$ are not bijections in general, because $[g+e,a,b]=[g,a,b]$ and  $[a,g+e,b]=[a,g,b]$, for all $g\in G$ and $e\in E$.\end{remark}

\begin{theorem}\label{xxx'} There is a category isomorphism of 
the category of pointed right groups to the category of pointed right heaps. It associates to every pointed right group 
    $(B,+,e)$ the pointed right heap $(B,[-,-,-],e)$, where $[-,-,-]$ is the ter\-nary operation on $B$ defined by $[a,b,c]=a+(b\backslash c)$, for all $a,b,c\in B$. This category isomorphism is the identity on morphisms.
\end{theorem}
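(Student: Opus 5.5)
We construct a functor $F$ from pointed right groups to pointed right heaps and a functor $G$ in the opposite direction, and show they are mutually inverse and both act as the identity on morphisms. For $F$, put $F(B,+,e)=(B,[-,-,-],e)$ with $[a,b,c]=a+(b\backslash c)$. Lemma~\ref{rightheap}(1)--(3) says this is a right heap. If $f\colon(B,+,e)\to(B',+,e')$ is a morphism of pointed right groups, it also preserves $\backslash$, because $\RGrp$ is full in $\SGrp$. Hence $f([a,b,c])=[f(a),f(b),f(c)]$ and $f(e)=e'$, so $f$ is a morphism of pointed right heaps.

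For $G$, take a pointed right heap $(H,[-,-,-],h)$ and set $G(H,[-,-,-],h)=(H,{\bi}_h,h)$, where $x\,{\bi}_h\,z=[x,h,z]$. We then check the following.
\begin{itemize}
\item[(i)] ${\bi}_h$ is associative. This follows from ternary associativity: $[[x,h,y],h,z]=[x,h,[y,h,z]]$.
\item[(ii)] $(H,{\bi}_h)$ is a left quasigroup. Indeed, $[a,h,-]$ is a bijection by Lemma~\ref{vip}, and $H\neq\emptyset$ because it contains $h$. So it is a right group, with right inverse $a\backslash c=[h,a,c]$.
\item[(iii)] $h$ is idempotent, since $[h,h,h]=h$ by the left Mal'tsev property.
\end{itemize}
A morphism of pointed right heaps preserves $[-,-,-]$ and sends $h\mapsto h'$, so it preserves ${\bi}_h$ and is a morphism of pointed right groups.

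It remains to show that the two composites are the identity on objects; on morphisms this is automatic. The composite $G\circ F$ turns $(B,+,e)$ into the operation $a\,{\bi}_e\,c=a+(e\backslash c)$. Here $e$ is idempotent, so by Lemma~\ref{1.2} it is a left identity, which gives $e\backslash c=c$. Hence ${\bi}_e=+$.

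The composite $F\circ G$ is the key computation. We must show that $[a,h,[h,b,c]]=[a,b,c]$ for all $a,b,c$, since $b\backslash c=[h,b,c]$ in $(H,{\bi}_h)$. This is exactly the right weakly Mal'tsev identity with $y=h$, $z=b$, $w=c$. This step is the crux: it is the one place where that axiom is needed, and it explains why the definition of right heap is forced.
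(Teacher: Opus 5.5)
Your proof is correct and follows the paper's argument essentially step by step. You use the same functors $F$ and $G$, with $G(H,[-,-,-],h)=(H,{\bi}_h,h)$, and the same check that ${\bi}_e=+$ because the idempotent $e$ is a left identity. You also use a single application of the right weakly Mal'tsev identity for $F\circ G=\id$, instantiated as $[a,h,[h,b,c]]=[a,b,c]$, where the paper uses $[a,b,[b,h,x]]=[a,h,x]$ with $[b,h,x]=c$. You additionally spell out the well-definedness of $G$, which the paper leaves as ``easily seen.''

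One small correction to your closing remark: the last step is not the only place the right weakly Mal'tsev axiom is needed. Lemma~\ref{vip}, which you invoke in (ii) to get that $[a,h,-]$ is a bijection with inverse $[h,a,-]$, is itself proved from that axiom.
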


\begin{proof}
    For every right group $(B,+)$,  $(B,[-,-,-])$ is a right heap by Lem\-ma~\ref{rightheap}. If $(B,+)$, $(B',+)$ are right groups and $f\colon B\to B'$ is a semigroup morphism, then $f$ is a right heap morphism. In fact, if $f\colon (B,+)\to (B',+)$ is a semigroup morphism, and $a,b,c,x\in B$, then $b+x=c$ implies $f(b)+f(x)=f(c)$, so that $f(b\backslash c)=f(b)\backslash f(c)$. Thus $f([a,b,c])=f(a+(b\backslash c))=f(a)+f(b\backslash c)=[f(a),f(b),f(c)]$, and $f$ is a right heap morphism. This proves that there is a functor $F$ of the category $\RGrp_*$ of pointed right groups into the category $\RHp_*$ of pointed right heaps.

    Conversely, associate to every pointed right heap $(H,[-,-,-], h)$ the pointed right group $(H, {\bi}_h, h)$. Notice that $h$ is idempotent in this right group. It is now easily seen that there is a functor $G\colon \RHp_*\mapsto \RGrp_*$, $G\colon (H,[-,-,-], h)\mapsto (H, {\bi}_h, h)$.

    In order to prove that $G\circ F$ is the identity functor of $\RGrp_*$, we must prove that for every pointed right group $(B,+,e)$, one has that the operations $+$ and ${\bi}_e$ coincide, that is, that $a+b=[a,e,b]$, for every $a,b\in B$. This follows from the fact that $e$ is idempotent in $(B,+)$.

    To prove that $F\circ G$ is the identity functor of $\RHp_*$, we must prove that for every pointed right heap $(H,[-,-,-], h)$, the operations $[-,-,-]$ and $[-,-,-]'$ coincide, where, for every $a,b,c\in H$, $[a,b,c]'=a{\bi}_h x$ and $x$ is the unique element of $H$ such that $b{\bi}_h x=c$. That is, we must prove that, for every $a,b,c,x\in H$, if $[b,h,x]=c$, then $[a,b,c]=[a,h,x]$. In fact, $[b,h,x]=c$ implies $[a,b,c]=[a,b,[b,h,x]]=[a,h,x]$.
\end{proof}

\begin{theorem}\label{xxx} There is a product-preserving, faithful, essentially surjective functor from 
the category of right groups to the category of non-empty right heaps. It associates to every right group 
    $(B,+)$ the right heap $(B,[-,-,-])$, where $[-,-,-]$ is the ternary operation on $B$ defined by $[a,b,c]=a+(b\backslash c)$, for all $a,b,c\in B$. The functor is the identity on morphisms.
\end{theorem}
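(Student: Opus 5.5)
We first check that the assignment is a functor, and then establish the three properties in turn: product preservation, faithfulness, and essential surjectivity.

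\emph{The functor.} By Lemma~\ref{rightheap}, $(B,[-,-,-])$ is a right heap, and it is non-empty because right groups are non-empty by definition. The proof of Theorem~\ref{xxx'} shows that every semigroup morphism $f$ between right groups satisfies $f(b\backslash c)=f(b)\backslash f(c)$. It is therefore a right heap morphism, so the assignment, taken to be the identity on morphisms, is a functor. Equivalently, it is the composite of the forgetful functor from right heaps to right groups and the isomorphism of Theorem~\ref{xxx'}, except that no base point is needed on the right group side.

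\emph{Faithfulness.} This is immediate, because the functor is the identity on underlying maps.

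\emph{Product preservation.} The product of right groups $(B_i,+)$ in $\SGrp$ is the cartesian product with componentwise operation, and this is again a right group. Its right inverse is computed componentwise, since $x$ solves $a+x=b$ exactly when each component does. Hence the induced ternary operation $[a,b,c]=a+(b\backslash c)$ is also componentwise. The product of right heaps, in the variety of right heaps, is likewise the cartesian product with componentwise operation, and its projections are the images of the semigroup projections. So the functor sends product cones to product cones. The empty product is the one-element right group, which goes to the one-element right heap, the terminal object.

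\emph{Essential surjectivity.} Let $(H,[-,-,-])$ be a non-empty right heap. Pick any $h\in H$. By Theorem~\ref{xxx'}, $(H,{\bi}_h,h)$ is a pointed right group, and the right heap it induces is $(H,[-,-,-])$ itself; the identity map is therefore an isomorphism, and in fact we obtain equality. The substantive step is really the identity $F\circ G=\id$ proved in Theorem~\ref{xxx'}, which relies on the right weak Mal'tsev property, and so can be quoted.

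\emph{The main obstacle.} The one point needing care is that the category of right groups has morphisms that are arbitrary semigroup maps, with no base point. The statement only asserts faithfulness, not fullness. Indeed, fullness fails: for a group $G$, a right heap morphism need not send $0$ to $0$. A translation $x\mapsto x+g$ is a heap morphism but not a group homomorphism. For this reason we claim only faithfulness and essential surjectivity, and we do not claim an equivalence of categories.
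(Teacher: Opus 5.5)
Your proof is correct and follows the paper's argument. Well-definedness comes from Lemma~\ref{rightheap} and the proof of Theorem~\ref{xxx'}. Essential surjectivity comes from the identity $[a,b,[b,h,x]]=[a,h,x]$, that is, $F\circ G=\id$. Products are computed componentwise.

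The one place you diverge from the paper is to your credit. The paper's proof contains an extra paragraph claiming that every right heap morphism between right groups is a semigroup morphism, and then says ``in particular, the functor is faithful''. That paragraph is wrong. Its step $z+f(e)+(z\backslash z)=z$ needs $f(e)$ to be idempotent in $(B',+)$. In fact $f(e)$ is only idempotent in $(B',{\bi}_{f(e)})$, which holds for every element and gives nothing. Your translation example shows that fullness fails, and so does the paper's own later remark that a trivial right heap has $|H|$ morphisms into $H$. As you say, faithfulness needs nothing beyond the functor being the identity on maps.

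One small point: your sentence describing the functor as a composite through Theorem~\ref{xxx'} is garbled. There is no forgetful functor from right heaps to right groups, and semigroup maps need not preserve chosen idempotents, so no such factorization exists. Nothing in your argument depends on it.
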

\begin{proof}
    Most properties follow from Theorem~\ref{xxx'} and its proof. In that proof we had already seen that if $(B,+)$, $(B',+)$ are right groups and $f\colon B\to B'$ is a semigroup morphism, then $f$ is a right heap morphism. Conversely, if $(B,+)$, $(B',+)$ are right groups and $f\colon (B,[-,-,-])\to (B',[-,-,-])$ is a right heap morphism, fix an idempotent element $e$ in $(B,+)$. Then $[f(e),f(e),f(e)]=f(e)$, so that $f(e)$ is an idempotent element in the magma $(B',{\bi}_{f(e)})$. By Lemma~\ref{rightheap}(6),
    the right group $(B',+)$ is isomorphic to the magma $(B',{\bi}_{f(e)})$ via $\sigma_0^{f(e)}\colon (B',+)\to (B',{\bi}_{f(e)})$ defined by $\sigma_0^{f(e)}(z)=[[z,0,{f(e)}],z,z]=z+{f(e)}+(z\backslash z)=z+(z\backslash z)=z$, for every $z\in B'$. Hence, the operations $+$ and ${\bi}_{f(e)}$ on $B'$ coincide. Therefore, $f(a+b)=f(a+(e\backslash b))=f([a,e,b])=[f(a),f(e),f(b)]=f(a){\bi}_{f(e)}f(b)=f(a)+f(b)$. Thus, $f$ is a semigroup morphism.
 In particular, the functor is faithful.

As far as essential surjectivity is concerned, let $(H,[-,-,-])$
be a non-empty right heap. Fix an element $y\in H$. It is easily seen that
$(H,{\bi}_y)$ is a right group (see Lemma~\ref{vip}). Define another ternary operation $[-,-,-]'$ on the set $H$ setting $$[a,b,c]'=a\,{\bi}_y\,(b\,{\backslash}_y\,c),$$ where $b\,{\backslash }_y\,c$ is the unique element $x$ of $B$ such that $b\,{\bi}_y\,x=c$. That is, $b\,{\backslash}_y\,c$ is the unique element $x$ of $B$ such that $[b,y,x]=c$. We must show that the two operations $[-,-,-]$ and $[-,-,-]'$ concide. Now, for every $a,b,c\in H$, we have that $[a,b,c]'=a\,{\bi}_y\,x=[a,y,x]$ and $[b,y,x]=c$. Thus $[a,b,c]=[a,b,[b,y,x]]=[a,y,x]=[a,b,c]'$, that is, 
two operations $[-,-,-]$ and $[-,-,-]'$ concide. This proves 
essential surjectivity.

Finally, it is clear that the product in the category of right groups and in the category of non-empty right heaps is the cartesian product with the component-wise operation. Therefore, the functor is product-preserving.
\end{proof}

The functor of Theorem~\ref{xxx} is not a category equivalence: the categories $\RGrp$ and $\RHp_{\ne\emptyset}$ of non-empty right heaps are not equivalent. In order to see it, assume by contradiction that $\RGrp$ and $\RHp_{\ne\emptyset}$ be equivalent. Both categories have terminal objects, the trivial groups and the trivial heaps (those of cardinality one). Thus, trivial groups must correspond to trivial heaps in the category equivalence. Now, if $G$ is any fixed non-trivial group, then its corresponding (right) heap $H$ must have more than one element. But there is only one morphism in $\Gp$ from a trivial group to $G$, while the set of all morphisms in $\RHp_{\ne\emptyset}$ from a trivial right heap to $H$ has the same cardinality as $H$, hence has cardinality $>1$. Therefore, there do not exist category equivalences from  $\RGrp$ to $\RHp_{\ne\emptyset}$.

\smallskip

In view of Theorem~\ref{xxx'}, we have the following Metatheorem: Let $p,q$ be two terms in the language of heaps (equivalently, in the language of right heaps). Assume that the identity $p=q$ holds for all heaps. Then the identity $p=q$ holds for all right heaps, provided that the last variable that appears in the term $p$ is equal to the last variable that appears in the term $q$.

For instance, it is well known that the identity $[[x,y,z],w,u]=[x,[w,z,y],u]$ holds for all heaps. Notice that in the two terms $[[x,y,z],w,u]$ and $[x,[w,z,y],u]$, the last variable is $u$ for both terms. Now if $H$ is a right heap, we can fix an element $0$ in $H$, so that the pointed right heap $(H,[-,-,-],0)$ corresponds to a pointed right group via Theorem~\ref{xxx'}, the pointed right group $(B,+,0)$, say. Then, in the pointed right heap $(H,[-,-,-],0)$ the identity $[[x,y,z],w,u]=[x,[w,z,y],u]$ with $x,y,z,w,u\in H$, can be rewritten as $[[g_x,g_y,g_z],g_w,g_u]+e_u=[g_x,[g_w,g_z,g_y],g_u]+e_u$. But the identity $[[g_x,g_y,g_z],g_w,g_u]=[g_x,[g_w,g_z,g_y],g_u]$ holds, because it holds in all heaps, and therefore it also holds in the pointed heap corresponding to the pointed right group $(B+0,+,0)$. We have thus proved %that:
the following result.

\begin{theorem} For every $x,y,z,w,u$ in a right heap, we have that $$[[x,y,z],w,u]=[x,[w,z,y],u].$$\end{theorem}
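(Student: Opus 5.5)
The plan is to reduce to right groups via Theorem~\ref{xxx'}, and then compute with the decomposition $b=g_b+e_b$. If the right heap is empty there is nothing to prove, so let $H$ be a non-empty right heap. Fix an element $0\in H$. By Theorem~\ref{xxx'}, the pointed right heap $(H,[-,-,-],0)$ comes from the pointed right group $(H,+,0)$, where $+={\bi}_0$. Hence $[a,b,c]=a+(b\backslash c)$ for all $a,b,c\in H$. The identity then follows from Lemma~\ref{rightheap}(4), which is exactly this statement for right heaps of the form $(B,[-,-,-])$ with $(B,+)$ a right group.

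To keep the argument self-contained, I will also redo the computation that the proof of Lemma~\ref{rightheap}(4) only sketches.

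1. Write every element as $x=g_x+e_x$, with $g_x=x+0$ in the group $H+0$ and $e_x=x\backslash x\in E$.
2. Record $b\backslash c=-g_b+c$, so that $[a,b,c]=a-g_b+c=g_a-g_b+g_c+e_c$. This uses that $e_a$ is a left identity, and that $e+g=g$ for $g$ in $H+0$ is absorbed. More precisely, $a+x$ depends only on $g_a$ and $x$, because $e_a+x=x$.
3. Compute the left-hand side: $[[x,y,z],w,u]=g_x-g_y+g_z-g_w+g_u+e_u$.
4. For the right-hand side, note that $g_{[w,z,y]}=g_w-g_z+g_y$, since $e_y+0=0$. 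Therefore $[x,[w,z,y],u]=g_x-(g_w-g_z+g_y)+g_u+e_u$.
5. The two expressions agree because $(H+0,+)$ is a group with identity $0$.

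The only point needing care is step~4: identifying the group part of $[w,z,y]$. Concretely, one needs $(g+e)+0=g$ for $g\in H+0$ and $e\in E$. This holds because $e+0=0$, as $e$ is a left identity, and then $g+0=g$.

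Alternatively, the paper's Metatheorem gives the result directly. Both terms have last variable $u$, and the identity holds in all heaps. Applying it to the group components in the heap $H+0$ and adding $e_u$ gives the claim.
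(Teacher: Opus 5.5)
Your proposal is correct and follows essentially the paper's route. You fix $0\in H$, use Theorem~\ref{xxx'} to write $[a,b,c]=a+(b\backslash c)$ for the right group $(H,{\bi}_0)$, and then reduce to the identity on the group components $g_x,\dots,g_u$ in $H+0$, carrying $e_u$ along; the paper phrases this last step as the heap identity holding in $(H+0,[-,-,-])$, while you cite Lemma~\ref{rightheap}(4) and write out the group computation explicitly.
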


We will come back to our Metatheorem in Remark~\ref{Meta}(a).

\begin{proposition}\label{functor}
    Let $(A,a_0)$ be a pointed set. Then there is a bijection between the set $\{\,+\,\mid (A,+)$ is a right group and $a_0$ is idempotent in $(A,+)\,\}$ of all binary operations $+$ on $A$ such that $(A,+)$ turns out to be a right group in which $a_0$ is idempotent, and the set $\{\,[-,-,-]\,\mid (A,[-,-,-])$ is a right heap$\,\}$ of all ternary operations $[-,-,-]$ on $A$ such that $(A,[-,-,-])$ turns out to be a right heap. If $+$ and $[-,-,-]$ correspond in this bijection, then:
\begin{itemize}
    \item[\rm (1)] An equivalence relation on $A$ is a congruence for the right group $(A,+)$, that is, is compatible with both the operation $+$ and its right inverse $\backslash$, if and only if it is compatible with the ternary operation $[-,-,-]$.

    \item[\rm (2)] A subset of $A$ containing $a_0$ is closed for both the operation $+$ and its right inverse $\backslash$ if and only if it  is closed for the operation $[-,-,-]$.
    
      \item[\rm (3)] A subset $A'$ of $A$ containing $a_0$ is closed for the operation $+$ and is a right group if and only if $A'$ is a right subheap of the right heap  $(A,[-,-,-])$, that is, for every $a',b'\in A'$, the bijection $[a',b',-,]\colon A\to A$ restricts to a bijection $[a',b',-,]\colon A'\to A'$.
  
    \item[\rm (4)] Let $f\colon A\to A$ be an idempotent mapping. If $f$ is a semigroup endomorphism of the semigroup $(A,+)$, then $f$ is a right heap endomorphism of $(A,[-,-,-])$. Conversely, if $f$ is a right heap endomorphism of $(A,[-,-,-])$ and $f(a_0)=a_0$, then $f$ is a right group endomorphism of the right group $(A,+)$.
    \end{itemize}\end{proposition}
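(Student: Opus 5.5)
The bijection itself will come directly from Theorem~\ref{xxx'}. To a right group operation $+$ on $A$ in which $a_0$ is idempotent, associate $[a,b,c]=a+(b\backslash c)$. To a right heap operation $[-,-,-]$ on $A$, associate ${\bi}_{a_0}$. In the proof of Theorem~\ref{xxx'} we checked that $G\circ F$ and $F\circ G$ are identities on objects, for any choice of base point. So these two assignments are mutually inverse once we restrict to pointed right groups $(A,+,a_0)$ and pointed right heaps $(A,[-,-,-],a_0)$ with the fixed underlying pointed set $(A,a_0)$. We also need ${\bi}_{a_0}$ to be a right group with $a_0$ idempotent. This follows from Lemma~\ref{vip} (left division) together with associativity and $[a_0,a_0,a_0]=a_0$ (left Mal'tsev).

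For (1) and (2) I will express each family of operations as terms in the other. On the right group side, $[a,b,c]=a+(b\backslash c)$. Conversely, $a+b=[a,a_0,b]$ and $b\backslash c=[a_0,b,c]$. The first holds because $a_0$ is idempotent, so $a_0\backslash b=b$. For the second, $[a_0,b,c]=a_0+(b\backslash c)=b\backslash c$, since $a_0$ is a left identity by Lemma~\ref{1.2}. Hence any equivalence relation compatible with $+$ and $\backslash$ is compatible with $[-,-,-]$, and conversely. The same holds for subsets, with the proviso that the subset contains $a_0$; this is the reason for the hypothesis in (2).

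For (3), a subset $A'\ni a_0$ closed under $+$ that is a right group is closed under $\backslash$. Indeed, the solution $x$ of $b'+x=c'$ inside $A'$ coincides with the one in $A$ by uniqueness, so by (2) $A'$ is closed under $[-,-,-]$. By Lemma~\ref{vip} the map $[a',b',-]$ restricted to $A'$ then has inverse $[b',a',-]$, also restricted to $A'$, so it is a bijection of $A'$. Conversely, suppose each $[a',b',-]$ restricts to a bijection of $A'$. Then $A'$ is closed under $[-,-,-]$, hence under $+$ and $\backslash$ by (2), and so $(A',+)$ is a subsemigroup which is a left quasigroup, that is, a right group.

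For (4), the forward direction is part of the proof of Theorem~\ref{xxx'}: semigroup morphisms between right groups preserve $\backslash$, hence preserve $[-,-,-]$. Idempotency of $f$ is not needed here. For the converse, $f(a+b)=f([a,a_0,b])=[f(a),a_0,f(b)]=f(a)+f(b)$, using $f(a_0)=a_0$. I expect no real obstacle. The only delicate point is making sure that left-identity facts such as $a_0\backslash b=b$ and $a_0+x=x$ are used correctly, via Lemma~\ref{1.2}.
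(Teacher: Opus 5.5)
Your proposal is correct and follows the route the paper indicates: the paper leaves the proof to the reader and specifies only the correspondence $+\mapsto a+(b\backslash c)$ and $[-,-,-]\mapsto +_{a_0}$. Your explicit term identities $a+b=[a,a_0,b]$ and $b\backslash c=[a_0,b,c]$ (valid because $a_0$ is a left identity by Lemma~\ref{1.2}), together with the mutual inverses $[a',b',-]$ and $[b',a',-]$ from Lemma~\ref{vip}, are exactly what make items (1)--(4) routine, and you are right that idempotency of $f$ plays no role in (4).
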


The proof of Proposition~\ref{functor} is an elementary exercise essentially based on Lemma~\ref{rightheap} and is left to the reader. The ternary operation $[-,-,-]$ associated to the binary operation $+$ is defined by $[a,b,c]=a+(b\backslash c)$, for all $a,b,c\in B$. Conversely, the binary operation associated to the ternary operation $[-,-,-]$ is the operation $+_{a_0}$. 

\smallskip

We know that if $e\in B$ is an idempotent element of the right group $(B,+)$, there is a corresponding direct-product decomposition $B=(B+e)\times E$, where the two projections are $B\to B+e$, $b\mapsto b+e$, and $B\to E$, $b\mapsto \,$``the unique element $x\in B$ such that $b+x=b$''. These two projections can be also viewed as the idempotent endomorphisms $\sigma_e,\tau$ of the right group $(B,+)$. 
Applying the functor of Theorem~\ref{xxx}, we get that the right heap $(B,[-,-,-])$ decomposes as a direct product of a heap $(B+e, [-,-,-])$ and a right heap $(E,\pi_3)$. Here the heap $(B+e, [-,-,-])$ is isomorphic to the quotient right heap $B/{\sim}$, where $\sim$ is the congruence on the right heap $(B,[-,-,-])$ generated by the set $\{\,([x,y,y],x)\mid x,y\in B\,\}$ (so that the operation induced by the ternary operation of $B$ on the quotient $B/{\sim}$ is also right Mal'tsev). Similarly, the right heap $(E,\pi_3)$  is isomorphic to the quotient heap $B/{\equiv}$, where $\equiv$ is the congruence on the right heap $(B,[-,-,-])$ generated by the set $\{\,([x,y,z],z)\mid x,y,z\in B\,\}$. The two congruences $\sim$ and $\equiv$ permute, and their product ${\sim}\circ{\equiv}$ and ${\equiv}\circ{\sim}$ coincide with the trivial congruence $\omega$ on $B$. 
%In fact, for every $x,y\in B$, we have that $(x,y)=(x,[x,y,y])\circ([x,y,y],y)\in {\sim}\circ{\equiv}$, so that, ${\sim}\circ{\equiv}=\omega$, and $(x,y)=(x,[y,x,x])\circ([y,x,x],y)\in 
%{\equiv}\circ{\sim}$, so ${\equiv}\circ{\sim}=\omega$. 

In right heaps we have %:
the following result.

\begin{proposition}\label{dec}
    Fix an element $a$ in a right heap $H$. Then:
\begin{itemize}
\item[\rm (1)] The mapping $p_a\colon H\to H$, defined by $p_a(x)=[x,a,a]$, for every $x\in H$, is an idempotent endomorphism of the right heap $H$. The image $G_a$ of $p_a$ is a right subheap of $H$ and is a heap.

\item[\rm (2)] The mapping $q_a\colon H\to H$, defined by $q_a(x)=[a,x,x]$, for every $x\in H$, is an idempotent endomorphism of the right heap $H$. The image $E_a$ of $q_a$ is a right subheap of $H$ and is a right zero heap.

    \item[\rm (3)] $x=[p_a(x),a,q_a(x)]$, for every $x\in H$. 

    \item[\rm (4)] $H$ is isomorphic to the direct product of the heap $G_a$ and the right zero heap $E_a$. The isomorphism is $\nu_a\colon H\to G_a\times E_a$, $x\in H\mapsto ([x,a,a], [a,x,x])$. Its inverse is the mapping $\mu_a\colon G_a\times E_a\to H$, $\mu_a\colon (y,z)\in G_a\times E_a\mapsto [y,a,z]$.

     \item[\rm (5)] $G_a\cap E_a =\{a\}$.
     
      \item[\rm (6)] $x=[p_a(x),a,q_a(x)]$ is the unique way of writing $x$ in the form $x=[y,a,z]$ with $y$ in the heap $G_a$ and $z$ in the right zero heap $E_a$.

      \item[\rm (7)] The two idempotent endomorphisms $p_a,q_a\colon H\to H$ are mutually orthogonal, in the sense that $p_a\circ q_a=q_a\circ p_a$ is the constant morphism $H\to H$ that sends all elements of $H$ to $a$.

\item[\rm (8)]  The kernels of $p_a$ and $q_a$ are the congruences $\sim$ and $\equiv$, respectively.
\end{itemize}\end{proposition}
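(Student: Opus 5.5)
The plan is to reduce everything to the pointed right group $(H,{\bi}_a)$ with distinguished idempotent $a$, using Theorem~\ref{xxx'}. By that theorem, $[x,y,z]=x{\bi}_a(y\backslash_a z)$, where $\backslash_a$ is the right inverse of ${\bi}_a$. Write each $x\in H$ uniquely as $x=g_x{\bi}_a e_x$, with $g_x=x{\bi}_a a$ in the group $H{\bi}_a a$ and $e_x=x\backslash_a x\in\E(H,{\bi}_a)$, exactly as at the end of Section~\ref{2} with $0=a$. In the proof of Lemma~\ref{rightheap} we saw that $[x,y,z]=g_x-g_y+g_z+e_z$, computing in $(H,{\bi}_a)$. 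Every item then becomes a computation in $G\times E$, where $G=H{\bi}_a a$ and $E$ is the right zero semigroup of idempotents.

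First compute $p_a$ and $q_a$. Since $g_a=a$ is the identity of $G$ and $e_a=a$,
\[
p_a(x)=g_x-a+a+e_a=g_x{\bi}_a a=g_x
\]
and
\[
q_a(x)=g_a-g_x+g_x+e_x=a{\bi}_a e_x=e_x,
\]
where the last equality holds because $a$ is a left identity. Hence $p_a$ and $q_a$ are the two canonical projections $\sigma_a$ and $\tau$ of the right group $(H,{\bi}_a)$. Both are idempotent semigroup endomorphisms, as recalled in Section~\ref{2}. By Theorem~\ref{xxx} (semigroup morphisms are right heap morphisms) they are right heap endomorphisms, which gives the first halves of (1) and (2).

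Next identify the images. $G_a=G$ is a group, so the induced ternary operation $g-h+k$ is right Mal'tsev and $G_a$ is a heap (Examples~\ref{primi}(1)). $E_a=E$ is a right zero semigroup, so $[x,y,z]=z$ there (Examples~\ref{primi}(2)). Both are closed under $[-,-,-]$ because they are subsemigroups that are right groups, by Proposition~\ref{functor}(3). The remaining items follow quickly:
\begin{itemize}
\item[(3)] $[g_x,a,e_x]=g_x-a+a+e_x=g_x{\bi}_a e_x=x$.
\item[(4)] This is the decomposition $H\cong G\times E$ of Theorem~\ref{1.1}(c), and $\mu_a(y,z)=[y,a,z]=y{\bi}_a z$. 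The map $\nu_a=(p_a,q_a)$ is a right heap morphism into the product, since both components are morphisms, and it is bijective with inverse $\mu_a$ by uniqueness of the decomposition.
\item[(5)] An element in $G\cap E$ is an idempotent of the group $G$, hence equals $a$.
\item[(6)] This is the uniqueness part of (4).
\item[(7)] $p_a(q_a(x))=g_{e_x}=e_x{\bi}_a a=a$, since $E$ is right zero, and $q_a(p_a(x))=e_{g_x}=a$.
\end{itemize}

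The main obstacle is (8). The kernel of $p_a$ is $\{(x,y): g_x=g_y\}$. The relation $\sim$ is contained in it, since $p_a([x,y,y])=[p_a x,p_a y,p_a y]=p_a(x)$ because $G_a$ is a heap. Conversely, suppose $g_x=g_y$. Then $y=[x,x,y]$ and $[x,y,y]=g_x+e_y=y$, so $(y,x)=([x,y,y],x)$ is a generator of $\sim$. For $q_a$, $\equiv$ is contained in the kernel because $E_a$ is right zero, so $q_a([x,y,z])=q_a(z)$. Conversely, if $e_x=e_y$, then $x=[y,y,x]$ with $x$ the last entry, which is useless. Instead use $[x,a,y]=g_x+e_y=g_x+e_x=x$, so $(x,y)=([x,a,y],y)$ is a generator of $\equiv$. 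One must check that these kernels are congruences, which holds because they are kernels of morphisms. Since each kernel is a congruence containing the generators of $\sim$ (respectively $\equiv$), and every pair in the kernel is itself a generator, the kernels coincide with $\sim$ and $\equiv$.
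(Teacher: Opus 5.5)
Your reduction to the pointed right group $(H,{\bi}_a,a)$ is legitimate, and items (1)--(7) and the $\sim$ half of (8) go through. The gap is in the $\equiv$ half of (8). You assert $[x,a,y]=g_x+e_y$, but your own coordinate formula gives $[x,a,y]=g_x-g_a+g_y+e_y=g_x+g_y+e_y$. Indeed, $[x,a,y]$ is by definition $x\,{\bi}_a\,y$. So $[x,a,y]=x$ holds only when $g_y=a$, and in general the pair $([x,a,y],y)$ does not witness $(x,y)\in{\equiv}$.

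Here is a concrete failure. Take $H=\mathbb{Z}\times\{e_1,e_2\}$ with $[(m,e),(n,e'),(k,e'')]=(m-n+k,e'')$, and let $a=(0,e_1)$, $x=(0,e_2)$, $y=(1,e_2)$. Then $q_a(x)=q_a(y)=(0,e_2)$, but $[x,a,y]=(1,e_2)=y$. Your pair is therefore the trivial pair $(y,y)$, and the inclusion $\ker q_a\subseteq{\equiv}$ is left unproved.

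The repair changes one symbol. The element whose coordinates are $g_x+e_y$ is $[x,y,y]=g_x-g_y+g_y+e_y$. If $q_a(x)=q_a(y)$, i.e.\ $e_x=e_y$, then $[x,y,y]=g_x+e_x=x$. Hence $(x,y)=([x,y,y],y)$ is itself a generating pair of $\equiv$. This is the same expression you used correctly for $\sim$, where $g_x=g_y$ gives $[x,y,y]=y$. With this correction your proof is complete.

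Your route differs from the paper's. The paper stays inside the ternary language: it uses the Metatheorem to see that $p_a$ is a morphism, explicit ternary identities for $q_a$, and an auxiliary lemma to show that $\mu_a$ is a morphism. For (8) it shows that the kernel of $p_a$ does not depend on $a$. You instead use Theorem~\ref{xxx'} to pass to coordinates $x=g_x\,{\bi}_a\,e_x$ in $(H,{\bi}_a)$. Every item then becomes bookkeeping in $G\times E$. This makes (8) especially transparent, since each kernel pair is, up to symmetry, a single generator.
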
 

\begin{proof}
(1) The mapping $p_a$ is idempotent because $[[x,a,a],a,a]=[x,a,a]$ The mapping $p_a$ is a right heap homomorphism, because $[[x,a,a],[y,a,a],[z,a,a]]=[[x,y,z],a,a]$ (this is an identity that holds for heaps, and the last variable in both terms is $a$.) The image of a right heap homomorphism is always a right subheap. Finally, the ternary operation is right Mal'tsev on $G_a=p_a(B)$, because if $[x,a,a],[y,a,a]\in p_a(B)$, then $[[x,a,a],[y,a,a],[y,a,a]]=[[x,y,y],a,a]=[x,a,a]$.

(2) The mapping $q_a$ is idempotent because, for every $x \in H$, $$[a,[a,x,x],[a,x,x]]=[[a,x,x],[a,x,x],[a,x,x]]=[a,x,x].$$ 
The ternary operation on the image $E_a$ of $q_a$ is the third canonical projection because, for every $x,y,z \in H$,
\begin{equation} [[a,x,x],[a,y,y],[a,z,z]]=[a,a,[a,z,z]]=[a,z,z],
\label{parziale}
\end{equation} 
hence $E_a$ is a right zero heap. Moreover,  %\begin{equation} 
\begin{align}\label{parziale2}[a,[x,y,z],[x,y,z]]&=[[a,z,y],x,[x,y,z]]=[[[a,z,y],x,x],y,z]= \\ \notag &=[[a,z,y],y,z]=[a,z,[y,y,z]]=[a,z,z].
\end{align} 
From (\ref{parziale}) and (\ref{parziale2}) we get that $q_a$ is a right heap homomorphism.

(3) For all $x\in H$, 
$$
[p_a(x),a,q_a(x)]=[[x,a,a],a,[a,x,x]]=[x,a,[a,x,x]]=[x,x,x]=x.
$$

(4) The mapping $H\to G_a\times E_a$, $x\in H\mapsto ([x,a,a], [a,x,x])$, is a right heap homomorphism by (1) and (2). In order to prove that the mapping $\mu_a\colon G_a\times E_a\to H$, $\mu_a\colon (y,z)\in G_a\times E_a\mapsto [y,a,z]$, is a right heap homomorphism, it is convenient to first prove an auxiliary lemma that shows, for right heaps, the fact that idempotents in a right group are left identities of the group.

\begin{lemma} Let $a,b,c,d$ be elements in a right heap $H$, and $y$ be an element in its right zero subheap $E_a$. Then:
\begin{itemize}
    \item[\rm (i)] $[[b,a,y],c,d]=[b,c,d]$.

\item[\rm (ii)] $[c,[b,a,y],d]=[c,b,d]$.
\end{itemize}
\end{lemma}
\begin{proof} If $y\in E_a$, which is the image of the idempotent mapping $q_a$, we have that $y=q_a(y)$. Therefore $y=[a,y,y]$ and so:
\begin{itemize}
    \item[\rm (i)] $[[b,a,y],c,d]=[[b,a,[a,y,y]],c,d]=[[b,y,y],c,d]=[b,c,d]$.
    \item[\rm (ii)] $[c,[b,a,y],d]=[c,[b,a,[a,y,y]],d]=[c,[b,y,y],d]=
    [c,b,d]$.
\end{itemize}
\end{proof}

We are now ready to prove that $\mu_a$ is a right heap homomorphism. For every $(x,y),(x',y'),(x'',y'')\in G_a\times E_a$, we have that $$\begin{array}{l}\mu_a([(x,y),(x',y'),(x'',y'')])=\mu_a([x,x',x''],[y,y',y''])= \\ \qquad\qquad =[[x,x',x''],a,[y,y',y'']]=[[x,x',x''],a,y'']=[x,x',[x'',a,y'']]= \\ \qquad\qquad =[[x,a,y],x',[x'',a,y'']]=[[x,a,y],[x',a,y'],[x'',a,y'']]= \\ \qquad\qquad =[\mu_a(x,y),\mu_a(x',y'),\mu_a(x'',y'')].\end{array}$$ This proves that $\mu_a$ is a right heap homomorphism. 

In view of (3), to prove that the two morphisms are mutually inverse, it suffices to show that $p_a([x,a,y])=x$ and $q_a([x,a,y])=y$, for every~$x\in G_a$ and~$y\in E_a$, since $\mu_a\nu_a = \id_H$ is trivially satisfied. 
Now, $p_a([x,a,y])=[[x,a,y],a,a]=[x,a,[y,a,a]]=[x,a,a]$ because %$y,a\in H_a$ 
$y,a\in E_a$ and the ternary operation is the third canonical projection on $E_a$. Therefore, $p_a([x,a,y])=[x,a,a]=x$, as desired.

As far as $q_a$ is concerned, since $y\in E_a$,
\begin{align*}
    q_a([x,a,y])
    =[a,[x,a,y],[x,a,y]]
    = [a,x,[x,a,y]]
    = [a,a,y]
    = y
\end{align*}
by Lemma \ref{rightheap}((2) and (3)).

(5) From $a=[a,a,a]$ it follows that $a\in G_a\cap E_a$. 

Conversely, suppose that $x\in G_a\cap E_a$. Since $p_a$ is idempotent, $p_a(x) = x$.  Since $E_a$ is a right zero heap and $a\in E_a$,  $p_a(x) = [x,a,a] = a$. Hence $x=a$. 

(6) In view of (3), we must show that if $x=[y,a,z]$ with $y\in G_a$ and $z\in E_a$, then $y=p_a(x)$ and $z=q_a(x)$. Assume $x=[y,a,z]$ with $y\in G_a$ and $z\in E_a$. We find that $$p_a(x)=p_a([y,a,z])=[p_a(y),p_a(a),p_a(z)]=[y,a,[z,a,a]]=[y,a,a]$$ because $z,a\in E_a$ and on $E_a$ the ternary operation is the third canonical projection. 
Hence, $p_a(x)=y$. For the other equality, $q_a(x)=q_a([y,a,z])=$\linebreak $=[q_a(y),q_a(a),q_a(z)]=[[a,y,y],a,z]=[a,a,z]$. Since $a,z\in E_a$ and on $E_a$ the ternary operation is the third projection, it follows that 
$q_a(x)=[a,a,z]=z$, as desired.

(7) Let $x\in H$. Then
     $(p_a\circ q_a)(x)=p_a(q_a(x))=[[a,x,x],a,a]=[a,a,a]=a$ and  \begin{align*}(q_a\circ p_a)(x)&=q_a(p_a(x))=[a,[x,a,a],[x,a,a]]=\\ &=[[a,a,a],x,[x,a,a]]=[a,x,[x,a,a]]=[a,a,a]=a.\end{align*}

(8) The kernel $\theta_a$ of the idempotent mapping $p_a$ is clearly the congruence generated by the set $\{\,([x,a,a],x)\mid x\in H\,\}$. Therefore, $\sim$ is the join of the set $\{\,\theta_a\mid a\in H\,\}$ in the lattice of all congruences of the right heap $(H,[-,-,-])$. 
In particular, $\theta_a$ is contained in $\sim$. Let us prove that, for every $a,b\in H$, the kernel of the two congruences $\theta_a$ and $\theta_b$ coincide. That is, let us show that, for every $x,y\in H$, $[x,a,a]=[y,a,a]$ if and only if $[x,b,b]=[y,b,b]$. If $[x,a,a]=[y,a,a]$, then $[[x,a,a],b,b]=[[y,a,a],b,b]$, so that $[x,[b,a,a],b]]=[y,[b,a,a],b]]$. Therefore, $[x,b,b]=[y,b,b]$. Similarly, for $[x,b,b]=[y,b,b]$ implies $[x,a,a]=[y,a,a]$. Thus $\theta_a=\theta_b$.
 
 As far as the kernel $\nu_a$ of the idempotent mapping $q_a$ is concerned,  we have that the the kernel $\nu_a$ of $q_a$ is the congruence generated by the set $\{\,([a,a,x],x)\mid x\in H\,\}$, and $\equiv$ is the congruence generated by the set $\{\,([x,y,z],z)\mid x,y,z\in H\,\}$. In particular, $\nu_a$ is contained in $\equiv$. For the converse, we must show that $ [x,y,z]\nu_a z$, for every $x,y,z\in H$. This is equivalent to proving that $q_a( [x,y,z])=q_a( z)$. Now, $q_a( [x,y,z])=[a,[x,y,z],[x,y,z]]=[a,z,[y,x,[x,y,z]]]=[a,z,z]=q_a(z)$. Therefore,  $\nu_a$ and $\equiv$ coincide.
\end{proof}

Clearly, the mappings $p_a$ and $q_a$ are the analogues of the mappings $\sigma_e$ and $\tau$ for right groups (Subsection~\ref{1.4}).

\begin{remarks}\label{Meta} (a) As a consequence of the fact that every right heap is the direct product of a heap and a right zero heap (Proposition~\ref{dec}), we get that an identity $p=q$ in the language of heaps holds for all right heaps if and only if it holds for all heaps and for all right zero heaps. 
Now, the identity $p=q$ holds for all right zero heaps if and only if the last variable that appears in the term $p$ is equal to the last variable that appears in the term $q$. This gives another interpretation of our previous Metatheorem.

(b) Proposition~\ref{dec}((6) and (7)) implies that $\mu(p_a\times c_a\times q_a)\Delta=\id_H$, where $H$ is any right heap, $\Delta\colon H\to H\times H\times H$ is the mapping $\Delta(x)=(x,x,x)$, $c_a\colon H\to H$ is the constant homomorphism constantly equal to $a$, and $\mu\colon H\times H\times H\to H$ is the mapping $\mu(x,y,z)=[x,y,z]$.

(c) From Proposition~\ref{dec}(8) we get that, for a right heap $H$ and any $a,b\in H$, we have that $G_a\cong G_b$ and $E_a\cong E_b$. 

(d) The identity $[a,b,c]=g_a-g_b+g_c+e_c$, for all $a,b,c\in B$, already noticed in the first paragraph of the proof of Lemma~\ref{rightheap} shows the direct-product decomposition of the right heap $(B,[-,-,-])$ with respect to the element $0$ very clearly: one has $B=[B,0,0]\times E$, where the ternary operation on $[B,0,0]$ is $[g_a,g_b,g_c]=g_a-g_b+g_c$, and the ternary operation on $E$ is $[e_a,e_b,e_c]=e_c$. 
\end{remarks}

For every $a\in H$, the decomposition $H\cong G_a\times E_a$ is a product decomposition in the category $\Hp$, but is not a coproduct decomposition in $\Hp$. It is a coproduct decomposition in the category $\Hp_*$, i.e.,  $(H,a)$ is the coproduct of $(G_a,a)$ and $(E_a,a)$ in $\Hp_*$. Notice the difference with the case of right groups: for a right group $S$, for every idempotent element $e\in E$ there is a product decomposition $Se\times E$, where the zero right group $E$ is fixed, while its complement, the group $Se$ depends on the idempotent $e$; for a right heap $H$, for every element $a\in H$ there is a product decomposition $G_e\times E_e$, where both the heap $G_e$ and the zero right heap $E_a$ depend on the fixed element $a$.

\begin{remark}\label{si}
    Lemma \ref{rightheap}(5) explains why in the study of right groups, the element $0$ of $B$ such that $B=(B+0)\times E$ can be chosen as any fixed {\em idempotent} element of $B$. In fact,  $0$ can be chosen as {\em any} element $y$ of $B$, but in this case we must deal with the operation ${\bi}_y$ defined by $x\,{\bi}_y\,z=g_x-g_y+g_z+e_z$. If $y$ is idempotent, then $g_y=0$, so that $x\,{\bi}_y\,z=g_x-0+g_z+e_z=g_x+g_z+e_z=x+z$, that is, ${\bi}_y$ is the original operation $+$ on the right group $(B,+)$.

    The functor described in Theorem~\ref{xxx} is not a category isomorphism, because, for a fixed right group $(B,+)$, $(B,+)$ is mapped to the right heap $(B,[-,-,-])$, but also all right groups $(B,{\bi}_y)$ ($y\in B$) are mapped to the same right heap $(B,[-,-,-])$. Also, all right groups $(B,{\bi}_y)$ ($y\in B$)  are isomorphic, but they can be distinct: it suffices to take as $y$ an element of $B$ that is not idempotent in $(B,+)$, and then $(B,+_y)$ and $(B,+)$ are distinct.

    Also notice that, trivially, the element $y\in B$ is always idempotent in $(B,{\bi}_y)$, because $y\,{\bi}_y\,y=y+(-y+y)=g_y+e_y=y$.
\end{remark}

It is now clear that our left $RG$-semibraces $(B,+,\circ)$ can be equivalently described as:
\begin{definition}
    A {\em left $RG$-semitruss} is a triplet $(T,[-,-,-],\circ)$, where\linebreak $(T,[-,-,-])$ is a right heap, $(T,\circ)$ is a right group, and left distributivity
\begin{equation*} 
a\circ [b, c,d] = [a\circ b,a\circ c, a\circ d]
\end{equation*} 
holds, for every $a,b,c\in T$.
\end{definition}

\begin{theorem}\label{2.14} 
{\rm (a)} If $(B,+,\circ)$  is a left $RG$-semibrace, then $(B,[-,-,-],\circ)$ is a left $RG$-semitruss, where $[-,-,-]: B \times B  \times B \to B$ is the ternary operation defined by $[a,b,c]=a+(b\backslash c)$, for all $a,b,c \in B$.

{\rm (b)}  If $(T,[-,-,-],\circ)$ is a left $RG$-semitruss and $y\in T$, then $(T,{\bi}_y,\circ)$ is a left $RG$-semibrace.

{\rm (c)} There is a faithful, essentially surjective functor from 
the category of left $RG$-semibraces to the category of left $RG$-semitrusses. \end{theorem}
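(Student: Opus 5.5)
We prove the three parts in order, reducing everything to the correspondence between right groups and right heaps in Theorem~\ref{xxx} and Theorem~\ref{xxx'}.

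For (a), let $(B,+,\circ)$ be a left $RG$-semibrace. By Lemma~\ref{rightheap}, $(B,[-,-,-])$ is a right heap, and $(B,\circ)$ is a right group by hypothesis. It remains to check left distributivity
$$a\circ[b,c,d]=[a\circ b,a\circ c,a\circ d].$$
We use the map $\lambda_a(x)=a\backslash(a\circ x)$, which is an automorphism of the semigroup $(B,+)$, so that $a\circ x=a+\lambda_a(x)$. Since $\lambda_a$ is a semigroup morphism between right groups, it respects $\backslash$; this is the fact used in the proof of Theorem~\ref{xxx'}. Hence
$$a\circ[b,c,d]=a+\lambda_a(b)+\bigl(\lambda_a(c)\backslash\lambda_a(d)\bigr).$$
On the other side,
$$[a\circ b,a\circ c,a\circ d]=a+\lambda_a(b)+\bigl((a+\lambda_a(c))\backslash(a+\lambda_a(d))\bigr).$$
So the step reduces to the right-group identity
$$(u+v)\backslash(u+w)=v\backslash w,$$
which holds because $(u+v)+(v\backslash w)=u+w$ and solutions of $\ell_{u+v}(x)=u+w$ are unique. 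An alternative route is to expand directly with the decomposition $x=g_x+e_x$, using $[a,b,c]=a-g_b+c$ and the skew distributivity $a\circ(b+c)=a\circ b+(a\backslash(a\circ c))$. I expect this step to be the only nonformal one. The delicate point is justifying $a\circ x=a+\lambda_a(x)$ and that $\lambda_a$ preserves $\backslash$, both of which come from \cite{AFMS} as recalled in Subsection~\ref{1.4}.

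For (b), let $(T,[-,-,-],\circ)$ be a left $RG$-semitruss and $y\in T$. By the essential surjectivity part of Theorem~\ref{xxx} (via Lemma~\ref{vip}), $(T,{\bi}_y)$ is a right group whose associated ternary operation is exactly $[-,-,-]$, that is, $[a,b,c]=a\,{\bi}_y\,(b\backslash_y c)$. We must verify
$$a\circ(b\,{\bi}_y\,c)=a\circ b\,{\bi}_y\,\bigl(a\backslash_y(a\circ c)\bigr).$$
The left-hand side is $a\circ[b,y,c]=[a\circ b,a\circ y,a\circ c]$ by left distributivity. The right-hand side is $[a\circ b,y,a\backslash_y(a\circ c)]$, and by Lemma~\ref{vip} we have $a\backslash_y(a\circ c)=[y,a,a\circ c]$. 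So the right-hand side equals $[a\circ b,y,[y,a,a\circ c]]=[a\circ b,a,a\circ c]$ by the right weak Mal'tsev property. The identity therefore reduces to
$$[a\circ b,a\circ y,a\circ c]=[a\circ b,a,a\circ c].$$
This does not follow from the right heap axioms alone: since the middle slot only sees the group part (Proposition~\ref{dec}), it needs $a\circ y$ and $a$ to have the same $G$-component in $(T,{\bi}_y)$. Here is what I will try first. The right heap axioms give $[a\circ y,a\circ y,a\circ c]=a\circ c$, and left distributivity with $[b,y,y]$ gives $a\circ[y,y,c]=[a\circ y,a\circ y,a\circ c]$, which is consistent. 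I would then compute $[u,a\circ y,v]$ by writing $a\circ y=[a,a,a\circ y]$ and expanding, possibly using that $\lambda$-type maps are defined relative to the chosen $y$. If that fails, I would adjust: verify the $RG$-semibrace identity in the form in which $-a$ means $\backslash_y$ applied to the element $a\circ y$, as in the truss law $a\circ b-\sigma(a)+a\circ c$ with $\sigma(a)=a\circ y$, and check that this is equivalent to the paper's definition once one passes to the idempotent case. I expect this to be the main obstacle.

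For (c), the functor is $(B,+,\circ)\mapsto(B,[-,-,-],\circ)$, acting as the identity on maps. Functoriality and faithfulness come from Theorem~\ref{xxx}: semigroup morphisms of $+$ are exactly right heap morphisms, and $\circ$ is untouched. Essential surjectivity follows from (b) together with the coincidence of $[-,-,-]$ with the ternary operation induced by ${\bi}_y$, established in the proof of Theorem~\ref{xxx}.
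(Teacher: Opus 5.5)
Your part (a) is correct. It differs from the paper's computation only in packaging: you write $a\circ x=a+\lambda_a(x)$ with $\lambda_a$ a semigroup automorphism of $(B,+)$ and use $(u+v)\backslash(u+w)=v\backslash w$. The paper instead solves $c+x=d$ and $a+y'=a\circ x$, and reads off $(a\circ c)\backslash(a\circ d)=y'$ directly from skew distributivity.

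In (b), your reduction to $[a\circ b,a\circ y,a\circ c]=[a\circ b,a,a\circ c]$ is right, and so is your suspicion. The identity cannot be proved because it is false in general, and (b) fails with it. Since $b,c$ range freely, $a\circ-$ is bijective, and $[u,w,-]$ has inverse $[w,u,-]$ (Lemma~\ref{vip}), the $RG$-semibrace axiom for $(T,+_y,\circ)$ is equivalent to
\[
[a,\,a\circ y,\,v]=v\qquad\text{for all }a,v\in T .
\]
Take $T=\mathbb{Z}$ with $[a,b,c]=a-b+c$ and $a\circ b=b$ (right zero, hence a right group). Left distributivity holds trivially, so this is a left $RG$-semitruss. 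But $[a,a\circ y,v]=a-y+v\neq v$ for $a\neq y$. Concretely, $a\circ(b+_y c)=b-y+c$ while $(a\circ b)+_y\bigl(a\backslash_y(a\circ c)\bigr)=b-a+c$, so (b) fails for every $y$. Arbitrary $y$ also fails for semitrusses that do come from a semibrace: for the trivial brace $a\circ b=a+b$ on $\mathbb{Z}$ and $y=1$, one side is $a+b+c-1$ and the other is $a+b+c$.

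The paper's own argument for (b) does not get around this. It states the goal with $a\backslash_y(a\circ c)$, but actually verifies
\[
a\circ(b+_y c)=(a\circ b)+_y\bigl((a\circ y)\backslash_y(a\circ c)\bigr).
\]
That is exactly your fallback, the truss law with $\sigma(a)=a\circ y$, and it is not the $RG$-semibrace axiom. Your proposed rescue via idempotents also fails, since in the example above every element is $\circ$-idempotent.

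Consequently the essential surjectivity in (c), which you (like the paper) derive from (b), is false too. Suppose the semitruss above were isomorphic to the image of a semibrace $(B,+,\circ')$. Then $\circ'$ would be right zero, and the right heap of $(B,+)$ would be right Mal'tsev, forcing $(B,+)$ to be a group. The axiom would then read $b+c=b-a+c$ for all $a$, forcing $|B|=1$.

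What survives is the following:
\begin{itemize}
\item $(T,+_y,\circ)$ is a left $RG$-semibrace if and only if $[a,a\circ y,v]=v$ for all $a,v$.
\item This condition holds when the semitruss is the image of a semibrace $(B,+,\circ)$ and $y$ is an idempotent of $(B,+)$, since then $+_y=+$.
\item Hence the functor in (c) is faithful, and its essential image consists exactly of the semitrusses admitting some $y$ with this property.
\end{itemize}
A correct write-up must add this hypothesis to (b) and restrict the claim in (c), rather than try to derive the reduced identity.
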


\begin{proof} (a) Let  $(B,+,\circ)$  be a left $RG$-semibrace, $\backslash$ be the left inverse operation of the addition, and $[a,b,c]=a+(b\backslash c)$. In order to prove that $(B,[-,-,-],\circ)$ is a left $RG$-semitruss, we must show that $a\circ [b,c,d]=[a\circ b,a\circ c,a\circ d]$ for every $a,b,c,d\in B$. 
Now, $a\circ [b,c,d]=a\circ (b+(c\backslash d))=a\circ (b+x)$, where $x\in B$ is the unique element such that $c+x=d\label{aaa}$. Thus $a\circ [b,c,d]=a\circ (b+x)=a\circ b +(a\backslash a\circ x)=a\circ b +y\label{fff}$, where  $y\in B$ is the unique element such that $a+y=a\circ x\label{ccc}$. Also, we have $a\circ d=a\circ (c+x)=a\circ c+(a\backslash(a\circ x))=a\circ c+y\label{bbb}$. Therefore, $[a\circ b,a\circ c,a\circ d]=a\circ b+((a\circ c)\backslash(a\circ d))=a\circ b+y$. Hence, $a\circ [b,c,d]=[a\circ b,a\circ c,a\circ d]$.

{\rm (b)}  Let $(T,[-,-,-],\circ)$ be a left $RG$-semitruss. Fix an element $y\in T$. We must prove that $a\circ(b\,{\bi}_y\,c)=(a\circ b){\bi}_y(a\backslash_y(a\circ c))$, where $a\backslash_y -$ is the left inverse mapping of $a\,{\bi}_y-=[a,y,-]$. Therefore, $a\backslash_y -=[y,a,-]$. Hence, $$a\circ(b\,{\bi}_yc)=a\circ[b,y,c]=[a\circ b,a\circ y,a\circ c],$$ and
$(a\circ b){\bi}_y((a\circ y)\backslash_y(a\circ c))=[a\circ b,y,[y,a\circ y,a\circ c]]=[a\circ b,a\circ y,a\circ c]$, which allows us to conclude.

{\rm (c)} is now clear.
\end{proof}

\section{Left quasiheaps}\label{4}

In this section, we will see how part of the previous theory developed for right groups generalizes to left quasigroups.

\begin{lemma}\label{rightheap'} 
Let $(Q,\cdot,\backslash)$ be a left quasigroup, and let $[-,-,-]$ be the ternary operation on $Q$ defined by $[a,b,c]=a\cdot(b\backslash c)$ for all $a,b,c\in Q$. Then:

{\rm (1)} The ternary operation $[-,-,-]$ is left Mal'tsev. 

{\rm (2)} The ternary operation $[-,-,-]$ is right weakly Mal'tsev. 

{\rm (3)}
For every $a,b\in Q$, the mapping $[a,b,-]\colon Q\to Q$ is a bijection. Equivalently, for every $a,b,c\in Q$, there is a unique element $x\in Q$ such that $[a,b,x]=c$. 

{\rm (4)} If $y\in Q$ is a left identity in $(Q,\cdot)$, then the operations  $\cdot_y$ and $\cdot$ on $Q$ coincide. 

{\rm (5)} For every $y\in Q$, the magma $(Q,\cdot_y)$ is a left quasigroup in which $y$ is a left identity.
\end{lemma}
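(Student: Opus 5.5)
Every item will be checked directly from the two defining identities of a left quasigroup, $x\cdot(x\backslash y)=y$ and $x\backslash(x\cdot y)=y$. Associativity is never needed, and none is available. Throughout, $\cdot_y$ denotes the binary operation $a\cdot_y c=[a,y,c]=a\cdot(y\backslash c)$, which is the analogue of ${\bi}_y$ for the present ternary operation.

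For (1), compute $[x,x,y]=x\cdot(x\backslash y)=y$ by the first identity. For (2), compute
$$[x,y,[y,z,w]]=x\cdot\bigl(y\backslash(y\cdot(z\backslash w))\bigr)=x\cdot(z\backslash w)=[x,z,w],$$
using the second identity. For (3), the map $[a,b,-]$ is the composite $\ell_a\circ(\ell_b)^{-1}$, where $(\ell_b)^{-1}=b\backslash-$, and so it is a bijection with inverse $\ell_b\circ(\ell_a)^{-1}$, that is, $c\mapsto b\cdot(a\backslash c)=[b,a,c]$. This agrees with Lemma~\ref{vip}, except that there the inverse was derived from associativity, which is why I compute it directly here. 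Alternatively, (1) and (2) give $[a,b,[b,a,c]]=[a,a,c]=c$ and, symmetrically, $[b,a,[a,b,c]]=c$, exactly as in the proof of Lemma~\ref{vip}, since that proof used only the two Mal'tsev-type properties.

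For (4), if $y$ is a left identity then $\ell_y=\id_Q$, hence $(\ell_y)^{-1}=\id_Q$, i.e. $y\backslash c=c$. Therefore $a\cdot_y c=a\cdot(y\backslash c)=a\cdot c$.

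For (5), the left multiplication map of $(Q,\cdot_y)$ at $a$ is $[a,y,-]$, which is bijective by (3); hence $(Q,\cdot_y)$ is a left quasigroup, with right inverse $a\backslash_y c=[y,a,c]=y\cdot(a\backslash c)$. Finally, $y\cdot_y c=[y,y,c]=c$ by (1), so $y$ is a left identity.

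There is no real obstacle. The only point requiring care is in (3) and (5), where the inverse of $[a,b,-]$ must be identified without associativity. Both the explicit composite description and the argument via (1) and (2) avoid it.
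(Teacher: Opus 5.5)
Your proof is correct and follows the paper's argument essentially line by line: writing $[a,b,c]=\ell_a(\ell_b)^{-1}(c)$ makes (1)--(4) immediate, and (5) follows from (3) and (1). One small correction to your aside: the proof of Lemma~\ref{vip} does not use associativity, only the left Mal'tsev and right weakly Mal'tsev identities (as you yourself say a sentence later), so your alternative argument for (3) is exactly that proof.
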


\begin{proof} Notice that the ternary operation is defined by  $[a,b,c]=\ell_a(\ell_b)^{-1}(c)$,  for every $a,b,c\in Q$. 

(1) $[x,x,y]=\ell_x(\ell_x)^{-1}(y)=y$. 

(2) $[x,y,[y,z,w]]=\ell_x(\ell_y)^{-1}(\ell_y(\ell_z)^{-1}(w))=\ell_x(\ell_z)^{-1}(w)=[x,z,w]$.

(3) The mapping $[a,b,-]\colon Q\to Q$ is the composite mapping $\ell_a(\ell_b)^{-1}$, i.e., it is a composite mapping of two bijections.

(4)
If $y$ is a left identity of $(Q,\cdot)$, then $$x\,\cdot_y\,z=[x,y,z]=\ell_x(\ell_y)^{-1}(z)=\ell_x(\id_Q)^{-1}(z)=\ell_x(z)=x\cdot z.$$ 

(5) The magma $(Q,\cdot_y)$ is a left quasigroup by (3), and $y$ is a left identity in $(Q,\cdot_y)$ by (1).
\end{proof}

A {\em left quasiheap} is a set $H$ with a ternary operation $[-,-,-]$ such that:

{\rm (1)} $[-,-,-]$ is {\em left Mal'tsev}, that is $[x,x,y]=y$. 

{\rm (2)}  $[-,-,-]$ is {\em right weakly Mal'tsev}.

\begin{lemma} For a left quasiheap $H$, the mapping $[a,b,-]\colon H\to H$ is a bijection, for every $a,b\in H$.
    \end{lemma}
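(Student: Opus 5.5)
The plan is to repeat the argument of Lemma~\ref{vip} verbatim, since that proof used only the left Mal'tsev and right weakly Mal'tsev identities, and never associativity. I will show that for fixed $a,b\in H$ the mapping $[b,a,-]\colon H\to H$ is a two-sided inverse of $[a,b,-]$.

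First, for every $c\in H$, I apply right weak Mal'tsevness $[x,y,[y,z,w]]=[x,z,w]$ with $x=a$, $y=b$, $z=a$, $w=c$. This gives $[a,b,[b,a,c]]=[a,a,c]$. By the left Mal'tsev identity $[x,x,y]=y$, this equals $c$. Hence $[a,b,-]\circ[b,a,-]=\id_H$.

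Second, I apply the same identity with the roles of $a$ and $b$ exchanged, i.e.\ $x=b$, $y=a$, $z=b$, $w=c$. This yields $[b,a,[a,b,c]]=[b,b,c]=c$, so $[b,a,-]\circ[a,b,-]=\id_H$. The two maps are therefore mutually inverse bijections of $H$.

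As a consequence, for every $a,b,c\in H$ there is a unique $x\in H$ with $[a,b,x]=c$, namely $x=[b,a,c]$.

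I expect no real obstacle. The only point to check is that the substitutions above really use only axioms (1) and (2) of a left quasiheap. They do, because each step is a single instance of one of those two identities.
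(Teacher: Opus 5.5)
Your proposal is correct and is essentially the paper's own proof: the paper also uses right weak Mal'tsevness and then left Mal'tsevness to get $[a,b,[b,a,x]]=[a,a,x]=x$, and exchanges $a$ and $b$ to conclude that $[a,b,-]$ and $[b,a,-]$ are mutually inverse. You are also right that this is the argument of Lemma~\ref{vip}, which never used associativity.
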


    \begin{proof} Since $[-,-,-]$ is left Mal'tsev and right weakly Mal'tsev, we know that $$[a,b,[b,a,x]]=[a,a,x]=x,$$ so that $[b,a,-]\colon H\to H$ is a right inverse for $[a,b,-]\colon H\to H$. Exchanging $a$ and $b$ we see that $[b,a,-]$ is also a left inverse for $[a,b,-]$. Therefore $[a,b,-]$ and $[b,a,-]$ are mutually inverse mappings $H\to H$.
        \end{proof}

         Clearly, right heaps are exactly the associative left quasiheaps.

      \begin{theorem}
          There is a product-preserving, faithful, essentially surjective functor from 
the category of left quasigroups to the category of left quasiheaps. It associates to every left quasigroup 
    $(Q,\cdot,\backslash)$ the left quasiheap $(Q,[-,-,-])$, where $[-,-,-]$ is the ternary operation on $Q$ defined by $[a,b,c]=a\cdot(b\backslash c)$ for every $a,b,c\in Q$. The functor is the identity on morphisms.
      \end{theorem}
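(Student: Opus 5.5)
The plan is to follow the proof of Theorem~\ref{xxx} step by step, replacing associativity by what Lemma~\ref{rightheap'} gives. We check the object map, the morphism map, faithfulness, essential surjectivity and preservation of products, in that order.

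\emph{Functor.} By Lemma~\ref{rightheap'}(1),(2), $(Q,[-,-,-])$ with $[a,b,c]=a\cdot(b\backslash c)$ is a left quasiheap. Morphisms in $\LQGrp$ preserve both $\cdot$ and $\backslash$. So for such a morphism $f$ we get $f([a,b,c])=f(a)\cdot(f(b)\backslash f(c))=[f(a),f(b),f(c)]$, and $f$ is a left quasiheap morphism. Identities and composites are clearly preserved, since the functor is the identity on morphisms.

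\emph{Faithfulness.} This is automatic, because the functor is the identity on underlying maps.

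\emph{Essential surjectivity.} This is the step I would treat as the main one. Let $(H,[-,-,-])$ be a left quasiheap; I assume $H\neq\emptyset$, since otherwise it is the image of the empty left quasigroup. Fix $y\in H$ and put $a\cdot_y c:=[a,y,c]$. By the lemma preceding the statement, $[a,y,-]$ is a bijection with inverse $[y,a,-]$. Hence $(H,\cdot_y)$ is a left quasigroup with right inverse $a\backslash_y c=[y,a,c]$.

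The left quasiheap attached to $(H,\cdot_y)$ has ternary operation
$$[a,b,c]'=a\cdot_y(b\backslash_y c)=[a,y,[y,b,c]].$$
By the right weak Mal'tsev identity this equals $[a,b,c]$. So the functor sends $(H,\cdot_y,\backslash_y)$ exactly to $(H,[-,-,-])$, which gives essential surjectivity (even surjectivity on objects). No associativity is used here, so the argument of Theorem~\ref{xxx} transfers verbatim.

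\emph{Products.} Products in both varieties are cartesian products with componentwise operations. The right inverse of a componentwise operation is the componentwise right inverse, so the ternary operation built on $Q_1\times Q_2$ is the componentwise one. Hence the canonical comparison map is the identity, and the functor preserves products.

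The only delicate point is bookkeeping about which morphisms are allowed. Morphisms of left quasigroups must preserve $\backslash$ as well, and with that convention the morphism map is well defined and faithful. I would also remark that, unlike Theorem~\ref{xxx}, fullness is not claimed: it fails for left quasigroups without a left identity. So I would not try to prove it.
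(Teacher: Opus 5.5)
Your proof is correct and follows the paper's argument step by step: well-definedness from Lemma~\ref{rightheap'}(1),(2), faithfulness because the functor is the identity on morphisms, and essential surjectivity via $(H,\cdot_y,\backslash_y)$ with $a\backslash_y c=[y,a,c]$ together with the right weakly Mal'tsev identity $[a,y,[y,b,c]]=[a,b,c]$. One caveat concerns your closing aside, which the proof does not need: fullness already fails for groups, not only for left quasigroups without a left identity. A one-element left quasigroup has exactly one morphism into a nontrivial group $G$ but $|G|$ left quasiheap morphisms into it, and the same example shows that the functor of Theorem~\ref{xxx} is not full either, so the contrast you draw with that theorem does not hold.
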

      \begin{proof} The functor is well defined by Lemma~\ref{rightheap'}((1) and (2)). Notice that if a mapping $f$ is a left quasigroup morphism, it preserves $\cdot$ and $\backslash$, hence it preserves $[-,-,-]$. Thus the functor is clearly product-preserving and faithful.

      As far as essential surjectivity is concerned,  if the left quasiheap is empty, it is the image of the empty left quasigroup. Hence, let $(H,[-,-,-])$
be a non-empty left quasiheap. Fix an element $y\in H$. It is easily seen that
$(H,{\bi}_y)$ is a left quasigroup with $y$ as a left identity. The right inverse $\backslash_y$ of the operation $\cdot_y$ is defined in such a way that $a\backslash_y-$ is the inverse of the bijection $a\cdot_y-=[a,y,-]$, which is $[y,a,-]$. Therefore, $a\backslash_yb=[y,a,b]$ for every $a,b\in H$. Define another ternary operation $[-,-,-]'$ on the set $H$ setting $[a,b,c]'=a\,\cdot_y\,(c\,\backslash_y\,c)$ for all $a,b,c\in H$. We must show that the two operations $[-,-,-]$ and $[-,-,-]'$ coincide. But, for every $a,b,c\in H$, we have that $[a,b,c]'=a\cdot_y(b\backslash_y\,c)=[a,y,[y,b,c]]=[a,b,c]$. This proves 
essential surjectivity.
      \end{proof}

    The involutory category isomorphism $\LQGrp\to \LQGrp$ that sends every left quasigroup $(Q, \cdot,\backslash)$ to the left quasigroup $(Q, \backslash, \cdot)$ and is the identity on left quasigroup morphisms has the following interpretation in left quasiheaps. For every left quasiheap $(H,[-,-,-])$ and every fixed element $y\in H$, define another ternary operation $[-,-,-]'_y$ on $H$  setting $[a,b,c]'_y=[y,a,[b,y,c]]$ for all $a,b,c\in H$. 
    Then the following are satisfied: \\
(1) $(H,[-,-,-]'_y)$ is a  left quasiheap, which we call the {\em inverse of $(H,[-,-,-])$ with respect to $y$}.
%. We call $(H,[-,-,-]'_y)$ the {\em inverse of $(H,[-,-,-])$ with respect to $y$}.

(2) If $(H,[-,-,-]),(L,[-,-,-])$ are left quasiheaps and $f\colon H\to L$ is a mapping, then $f\colon (H,[-,-,-])\to (L,[-,-,-])$ is a left quasiheap morphism if and only if $f\colon (H,[-,-,-]'_y)\to (L,[-,-,-]'_{f(y)})$ is a left quasiheap morphism.

(3) The construction is involutory, that is $[[-,-,-]'_y]'_y=[-,-,-]$.

\noindent The proofs are elementary.

\bibliographystyle{amsalpha}

\begin{thebibliography}{A}

\bibitem [AFMS]{AFMS} A. Albano, A. Facchini, M. Mazzotta, and P. Stefanelli, \textit{Right groups and the set-theoretic Yang-Baxter equation.} Submitted for publication (2026). Also available at https://arxiv.org/abs/2605.25660

\bibitem [B]{Brze} T. Brzezi\'nski, \textit{Trusses: between braces and rings.} Trans. Amer. Math. Soc.
\textbf{372} (2019), 4149--4176.

\bibitem [CP]{ClPr61} A. H. Clifford and G. B. Preston, \textit{The algebraic theory of semigroups. Vol. I},
Amer. Math. Soc., Providence, RI, 1961.


%\bibitem[D]{Dr92} V. G. Drinfel'd, \textit{On some unsolved problems in quantum group theory}, Quantum groups ({L}eningrad, 1990), Lecture Notes in Math., 1510, 1--8.

\bibitem [E]{Australian} C. C. Edmunds, \textit{Interchange rings.} J. Aust. Math. Soc.
\textbf{101} (2016), 310--334.

\bibitem [ESS]{ESS99} Etingof, P. and Schedler, T. and Soloviev, A., \textit{Set-theoretical solutions to the quantum {Y}ang-{B}axter equation} Duke Math. J.
\textbf{100}(2) (1999), 169--209.


%\bibitem [F]{ArticoloperLens} A. Facchini,  \textit{Trusses, weak trusses, ditrusses.} Submitted for publication (2026). Also available at http://arxiv.org/abs/2510.23185

%\bibitem [FF]{FacFin} A. Facchini,  and C. A. Finocchiaro,  \textit{A pretorsion theory for right groups.} Submitted for publication (2026). Also available at https://arxiv.org/abs/2603.23982

\bibitem [GV]{GuVe17} L. Guarnieri,  and L. Vendramin,  \textit{Skew braces and the Yang-Baxter equation.} Math. Comp.
\textbf{86} (2017), 2519--2534.

\bibitem [HL]{HL} C. D. Hollings, and M. V. Lawson, \textit{Wagner's theory of generalised heaps},
Springer, Cham, 2017.

\bibitem [JMZ]{JePiZa19} P. Jedli\v cka, A. Pilitowska, and A. 
              Zamojska-Dzienio, \textit{The retraction relation for biracks.} J. Pure Appl. Algebra \textbf{223} (2019), 3594--3610.

\bibitem [K]{Koch5} J. Koch, \textit{Note on commutativity of double semigroups and two-fold monoidal categories.} J. Homotopy Relat. Struct.
\textbf{2} (2007), 217--228.

\bibitem [Sta]{St06} D. Stanovsk\'y, \textit{On axioms of biquandles.} J. Knot Theory Ramifications
\textbf{15} (2006), 931--933.

\bibitem [Ste]{St25} P. Stefanelli, \textit{Dual weak braces and {P}\l onka sums of solutions of the
              Yang-Baxter equation},
Loops'23: Nonassociative algebra, vol. 129,
Banach Center Publ., 2025, pp. 217--229.
\end{thebibliography}

\end{document}